\let\Oldepsilon\epsilon
\let\Oldvarepsilon\varepsilon
  \renewcommand{\varepsilon}{\Oldepsilon}
  \renewcommand{\epsilon}{\Oldvarepsilon}
\let\Oldphi\phi
\let\Oldvarphi\varphi
  \renewcommand{\varphi}{\Oldphi}
  \renewcommand{\phi}{\Oldvarphi}
\newcommand{\C}{\mathbb{C}}
\newcommand{\R}{\mathbb{R}}
\newcommand{\Z}{\mathbb{Z}}
\newcommand{\cF}{\mathcal{F}}
\theoremstyle{plain}
\newtheorem{theorem}{Theorem}[section]
\newtheorem{corollary}[theorem]{Corollary}
\newtheorem{lemma}[theorem]{Lemma}
\newtheorem{proposition}[theorem]{Proposition}
\newtheorem*{maintheorem}{Main Theorem}
\theoremstyle{definition}
\theoremstyle{remark}
\numberwithin{equation}{section}
\newcommand{\lip}{\left\langle}
\newcommand{\rip}{\right\rangle}
\newcommand{\lnorm}{\left\Vert}
\newcommand{\rnorm}{\right\Vert}
\newcommand{\biglnorm}{\bigl\|}
\newcommand{\bigrnorm}{\bigr\|}
\newcommand{\lset}{\left\lbrace}
\newcommand{\rset}{\right\rbrace}
\newcommand{\labs}{\left\vert}
\newcommand{\rabs}{\right\vert}
\newcommand{\biglabs}{\bigl\vert}
\newcommand{\bigrabs}{\bigr\vert}
\newcommand{\lpar}{\left(}
\newcommand{\rpar}{\right)}
\newcommand{\biglpar}{\bigl(}
\newcommand{\bigrpar}{\bigr)}
\newcommand{\Biglpar}{\Bigl(}
\newcommand{\Bigrpar}{\Bigr)}
\newcommand{\hatotimes}{\mathbin{\hat{\otimes}}}
\newcommand{\group}[1]{\mathrm{#1}}
\newcommand{\half}{\frac{1}{2}}
\newcommand{\Hilb}{\mathcal{H}}
\newcommand{\vect}{\mathcal{V}}
\newcommand{\ub}{\mathrm{ub}}
\newcommand{\cb}{\mathrm{cb}}
\newcommand{\op}{\mathrm{op}}
\newcommand{\cosec}{\operatorname{cosec}}
\newcommand{\sgn}{\operatorname{sgn}}
\renewcommand{\Re}{\operatorname{Re}}
\renewcommand{\Im}{\operatorname{Im}}
\newcommand{\dvec}{}
\begin{document}
\title[Completely bounded multipliers of $\group{SL}(2,\R)$]{Uniformly bounded representations \\
and completely bounded multipliers of $\group{SL}(2,\R)$}
\author{Francesca Astengo}
\address
{Dipartimento di Matematica, Universit\`a di Genova,
16146 Genova, Italia}
\email{astengo@dima.unige.it}

\author{Michael G. Cowling}
\address
{School of Mathematics and Statistics, University of New South Wales,
UNSW Sydney 2052, Australia}
\email{m.cowling@unsw.edu.au}

\author{Bianca Di Blasio}
\address
{Dipartimento di Matematica e Applicazioni\\
Universit\`a di Milano Bicocca \\
Via Cozzi 53\\
20125 Milano\\ Italy}
\email{bianca.diblasio@unimib.it}

\subjclass[2010]{Primary: 43A80; secondary: 22E30}
\keywords{Completely bounded multipliers, Fourier algebra, $\group{SL}(2\R)$}

\begin{abstract}
We  estimate the norms of many matrix coefficients of irreducible uniformly bounded representations of $\group{SL}(2,\R)$ as completely bounded multipliers of the Fourier algebra.
Our results suggest that the known inequality relating the uniformly bounded norm of a representation and the completely bounded norm of its coefficients may not be optimal.
\end{abstract}

\maketitle
\section{Introduction}
We begin by summarising our results.
A representation $\pi$, by which we always mean a continuous representation of a locally compact group $G$ on a Hilbert space $\Hilb_\pi$, is said to be \emph{uniformly bounded} if $\pi(x)$ is a bounded operator on $\Hilb_\pi$ for each $x \in G$, and there is a constant $C$, necessarily no less than $1$, such that
\begin{equation}\label{eq:def-ub}
C^{-1}  \lnorm\dvec{v}\rnorm_{\Hilb_\pi}  \leq \lnorm \pi(x) \dvec{v} \rnorm_{\Hilb_\pi} \leq C \lnorm\dvec{v}\rnorm_{\Hilb_\pi}
\qquad\forall x \in G \quad\forall \dvec{v} \in \Hilb_\pi;
\end{equation}
the two inequalities are equivalent because $\pi$ is a representation.
We write $\lnorm  \pi(x)  \rnorm_{\op}$ for the operator norm of $\pi(x)$ and define the \emph{norm} of $\pi$, written $\lnorm  \pi \rnorm_{\ub}$, to be the smallest possible value of $C$ in this inequality.

Suppose that $\pi$ and $\sigma$ are uniformly bounded representations of $G$.
A linear operator from $\Hilb_\pi$ to $\Hilb_\sigma$ such that $\sigma(x) T = T \pi(x)$ for all $x \in G$ is called an intertwiner.
We say that $\pi$ and $\sigma$ are \emph{similar} if there is an intertwiner that is bounded with bounded inverse, and \emph{unitarily equivalent} if there is a unitary intertwiner.
Similarity and unitary equivalence are equivalence relations.
Similar uniformly bounded representations may have different norms, and hence not be equivalent.
In general, little seems to be known about similarity classes of uniformly bounded representations, or about finding uniformly bounded representations in an equivalence class with minimal norm.
Of course, if a unitary bounded representation is similar to a unitary representation, then the unitary representation has minimal norm in the equivalence class.

Around 1950, a number of researchers looked at uniformly bounded representations in their studies of amenability.
Once it was known that every uniformly bounded representation of an amenable group is unitarizable, that is, similar to a unitary representation, J.~Dixmier \cite{Dix} asked whether this was true in general or whether this characterized amenability.

In 1955, L.~Ehrenpreis and F.~Mautner~\cite{EhrMau1, EhrMau2} showed that $\group{SL}(2, \R)$ has two analytic families of representations $\pi_{\zeta, \epsilon}$, where $\zeta \in \C$ and $\epsilon$ is either $0$ or $1$.
These representations have bounded $K$-finite matrix coefficients  (here $K$ is $\group{SO}(2)$) if and only if $\labs \Re\zeta \rabs \leq \half$, and they are uniformly bounded when $\labs  \Re \zeta  \rabs< \half$; most of them are not similar to unitary representations.
One of the main techniques in the work of Ehrenpreis and Mautner is the use of generalised spherical functions $\phi_{\zeta, \epsilon}^{\mu,\nu}$, matrix coefficients of $\pi_{\zeta, \epsilon}$ that transform under the left and right actions of $K$ by imaginary exponentials, and are given on the diagonal subgroup of $\group{SL}(2,\R)$ by hypergeometric functions.
Harish-Chandra used the natural extensions of these generalised spherical functions in his studies of harmonic analysis on semisimple Lie groups.
These are known as generalised spherical functions as they extend the classical spherical functions, which transform trivially under the left and right actions of $K$.

Shortly after, R.A.~Kunze and E.M.~Stein \cite{KunStn1} found a use for these uniformly bounded representations, first realising them on the same Hilbert space, and then using them to prove what is now called the Kunze--Stein phenomenon for $\group{SL}(2,\R)$.
Since then, considerable effort has gone into the construction of uniformly bounded representations.
Apart from their fundamental paper \cite{KunStn1}, Kunze and Stein \cite{KunStn2, KunStn3, KunStn4}, as well as several other authors, constructed analytic families of uniformly bounded representations for many noncompact semisimple Lie groups in the 1960s and 1970s.
In the 1970s and 1980s, uniformly bounded representations were constructed for other groups; for example, A.~Fig\`a-Talamanca and M.A.~Picardello \cite{FigPic} and shortly after T.~Pytlik and R.~Szwarc \cite{PytSzw} found uniformly bounded representations of the free groups.
Very recently, K.~Juschenko and P.W.~Nowak \cite{JusNow} linked uniformly bounded representations with the exactness of discrete groups.

In the 1970s and 1980s, U.~Haagerup and his collaborators (see, for instance, \cite{Haa1, DcaHaa, CowHaa}) showed that completely bounded multipliers of the Fourier algebra have an important role to play in harmonic analysis.
As had already been remarked by J.E.~Gilbert \cite{Gil} and N.~Lohou\'e \cite{Loh}, each matrix coefficient of a uniformly bounded representation $\sigma$ of a group $G$, more precisely, for every choice of $\dvec{v},\dvec{w} \in \Hilb_\sigma$, the function $x \mapsto \lip \sigma(x) \dvec{v}, \dvec{w} \rip$,  which we abbreviate to $\lip \sigma \dvec{v}, \dvec{w} \rip$, is a completely bounded multiplier of the Fourier algebra $A(G)$, and there is a norm estimate
\begin{equation}\label{eq:cb-ub}
\lnorm  \lip \sigma \dvec{v}, \dvec{w} \rip  \rnorm_{\cb} \leq \lnorm  \sigma  \rnorm_{\ub}^2 \lnorm \dvec{v} \rnorm_{\Hilb_\sigma} \lnorm \dvec{w} \rnorm_{\Hilb_\sigma} .
\end{equation}
However, it was observed by Haagerup \cite{Haa2}, for the case in which $G = \group{SL}(2, \R)$ and $\lip \sigma \dvec{v}, \dvec{w} \rip$ is the spherical function associated to $\pi_{\zeta,\epsilon}$,
that this inequality is far from sharp.
We show here that similar equalities hold for the generalised spherical functions $\phi^{\mu,\nu}_{\zeta,\epsilon}$, which we define in \eqref{eq:def-gen-sph-functions} below.

\begin{maintheorem}
If $-\half < \Re\zeta < \half$ and $\mu \in 2\Z$, then
\[
\biglnorm \phi_{\zeta,0}^{\mu,\mu} \bigrnorm_{\cb}
\leq \sec(\pi\Re\zeta).
\]
More generally, if $0 < \xi_0 < \half$, $\epsilon \in \{0,1\}$ and $\mu, \nu \in 2\Z + \epsilon$, then
$\biglnorm \phi_{\zeta,\epsilon}^{\mu,\nu} \bigrnorm_{\cb}$ is uniformly bounded in the strip $\{ \zeta \in \C : \labs \Re\zeta \rabs \leq \xi_0\}$.
\end{maintheorem}

Our estimates are not uniform in $\mu$ and $\nu$, but this may be an artifact of our proof.
An (unpublished) announcement of V.~Losert gives us hope that this might be true.

In a preceding paper \cite{ACD}, we considered the Kunze--Stein representations $\pi_{\zeta, \epsilon}$; we proved that if $\sigma$ is a uniformly bounded representation of $\group{SL}(2, \R)$ that is similar to $\pi_{\zeta,\epsilon}$, where $\labs  \Re \zeta  \rabs < \half$ and $(\zeta, \epsilon) \neq (0,1)$, then
\begin{equation}\label{eq:unif-bounded-bound}
\lnorm   \sigma  \rnorm_{\ub}
\geq \lnorm  \pi_{\zeta, \epsilon}  \rnorm_{\ub}
\simeq \frac{(1+\labs \Im \zeta \rabs)^{\labs \Re \zeta \rabs}}{\half-\labs \Re \zeta  \rabs}
\end{equation}
when $ \Im\zeta $ is  large.
(Here and later in this paper, the expression $A(\zeta) \simeq B(\zeta) $ for all $\zeta \in E$, where $E$ is some subset of the domains of $A$ and of $B$, means that there exist constants $C$ and $C'$ such that $C\, A(\zeta)  \le B(\zeta)  \le C'\,A(\zeta)  $ for all $\zeta \in E$.
Our ``constants'' are all positive real numbers.)
This seems to confirm that the inequality \eqref{eq:cb-ub} is far from optimal, and suggests there may be a sharper version thereof yet to be unveiled, at least for the group $\group{SL}(2, \R)$.

We now provide more historical context for our results.
In the 1960s, N.Th.~Varopoulos \cite{Var1, Var2, Var3} used tensorial methods to answer questions about thin sets.
C.S.~Herz \cite{Her1, Her2}, inspired by Varopoulos, looked at pointwise multipliers of the projective tensor product $L^2(G) \mathbin{\hat{\otimes}} L^2(G)$, and connected these with noncommutative harmonic analysis, and in particular, with certain pointwise multipliers of the Fourier algebra that we will call completely bounded multipliers.
M.~Bo\.{z}ejko and G.~Fendler (see, e.g., \cite{BozFen1, BozFen2}) developed Herz's ideas, and Haagerup and his collaborators and students demonstrated the central role in harmonic analysis and operator theory of completely bounded multipliers.
Comparatively recently, G.~Pisier \cite{Pis1, Pis2} has studied uniformly bounded representations, on the one hand taking giant strides towards the solution of the Dixmier similarity problem and on the other developing the links between uniformly bounded representations and multipliers of the Fourier algebra.

Quite a lot of work has already been done about simple Lie groups of real rank one, that is members of the families $\group{SO}(n,1)$, $\group{SU}(n,1)$ and $\group{Sp}(n,1)$, as well as the single exceptional group $F_{4, -20}$.
For the moment, let us just recall that these groups contain a maximal compact subgroup $K$, and admit Iwasawa decompositions $KAN$, where $A$ is isomorphic to $\R$ and $N$ is nilpotent.
The \emph{spherical functions} $\phi_\zeta$ are $K$-bi-invariant functions on $G$ (with additional properties) that are important in harmonic analysis on $G$; in particular, they are matrix coefficients of the class-one principal series of representations.
J.~De Canni\`ere and Haagerup \cite{DcaHaa} and M.~Cowling and Haagerup \cite{CowHaa} estimated the completely bounded multiplier norms of various spherical functions.
T.~Steenstrup \cite{Stp} computed the norms $\lnorm \phi_\zeta \rnorm_{\cb}$ exactly for the bounded spherical functions on $\group{SO}(n,1)$, and showed that
\[
\begin{aligned}
\lnorm \phi_{\zeta} \rnorm_{\cb}
&= 	 \labs \frac{1 +  \sec^2(\pi \xi)  \sinh^2(\pi \eta)  } 	{  1 +  \sinh^2(\pi \eta)   } \rabs^{1/2}  ,
\end{aligned}
\]
where $\zeta = \xi + i \eta$.
For a fixed $\xi$, the right hand side is bounded and even, takes its minimum value $1$ when $\eta = 0$, and tends to $\sec(\pi \xi)$ as $\eta \to \pm \infty$.
Steenstrup deduced (following the ideas of Haagerup \cite{Haa2}) that there exist completely bounded multipliers of $A(G)$ that do not arise as a matrix coefficient of a uniformly bounded representation, using functional analysis and estimates for various norms associated to spherical functions.
This fact confirms that inequality~{\eqref{eq:cb-ub}} is far from sharp.

Our aim is to prove the Main Theorem; this paper is structured as follows.
In Section~2, we fix notation and review a few facts on uniformly bounded representations and completely bounded multipliers.
In Section~3, we describe the representations of the group $\group{SL}(2,\R)$, the intertwining operators, and the spherical functions.
In Section~4, we study the completely bounded multiplier norm of the generalised spherical functions, proving the Main Theorem.

\section{Background and notation}

In this section, we introduce some notation, and prove some preliminary results about completely bounded multipliers.
We also describe some formulae involving gamma functions that we will use.

Suppose that $G$ is a locally compact group.
We equip $G$ with right-invariant Haar measure (written $dx$ or $dy$ in integrals) and then define the usual Lebesgue spaces $L^p(G)$.
P.~Eymard \cite{Eym} defined the Fourier algebra $A(G)$, a space of continuous functions on $G$ that vanish at infinity.
The functions $u: G \to \C$ in $A(G)$ are the matrix coefficients of the right regular representation $\rho$ of $G$ on $L^2(G)$.
More precisely, $u \in A(G)$ if and only if there are functions $h$ and $k$ in $L^2(G)$ such that $u = \lip \rho h,k\rip$, that is,
\[
u(x) = \int_G h(yx) \,\bar k(y) \,dy
\qquad\forall x \in G.
\]
Eymard showed that $A(G)$ is closed under pointwise operations, which is not apparent from the definition, and further that $A(G)$ is a Banach algebra, with the norm given by
\[
\lnorm u \rnorm_A = \inf\lset \lnorm h \rnorm_2 \lnorm k \rnorm_2 : u = \lip \rho h,k\rip \rset.
\]
For future purposes, we note that if $G$ is a locally compact abelian group, written additively, with dual group $\hat G$, then $u$ is in $A(G)$ if and only if its Fourier transform $\hat u$ is in $L^1(\hat G)$; further, $\lnorm u \rnorm_{A} = \biglnorm \hat u \bigrnorm_{1}$.
Thus if
\[
u(x) = \int_G h(y + x) \, k(y) \, dy
\qquad\forall x \in G,
\]
where $h, k \in L^2(G)$, then
\[
\lnorm u \rnorm_A = \int_{\hat G} \labs \hat h(z) \, (\hat k)(-z) \rabs \, dz.
\]
If moreover $\biglabs \hat h\bigrabs = \biglabs (\hat k)\check{\phantom{k}}\bigrabs$, where $\check m (x) = m(-x)$ for all $x \in G$, then
\begin{equation}\label{eq:abelian-A-norm}
\lnorm u \rnorm_A = \lnorm h\rnorm_2 \lnorm k \rnorm_2.
\end{equation}

Eymard also defined the Fourier--Stieltjes algebra $B(G)$ of $G$, as the space of all matrix coefficients of ``all''  unitary representations of $G$.
Then $u \in B(G)$ if and only if there exist a unitary representation $\sigma$ of $G$ and vectors $\dvec{v}$ and $\dvec{w}$ in $\Hilb_\sigma$ such that $u = \lip\sigma \dvec{v}, \dvec{w} \rip$.
The $B(G)$-norm of $u$ is the infimum (in fact, the minimum) of the products $\lnorm \dvec{v} \rnorm_{\Hilb_\sigma} \lnorm \dvec{w} \rnorm_{\Hilb_\sigma}$ over all such representations of $u$ as $\sigma$, $\dvec{v}$ and $\dvec{w}$ all vary.

Note that inversion (the map $u \mapsto \check u$, where $\check u(x) = u(x^{-1})$) and complex conjugation of functions are isometries of $B(G)$.
Indeed, if $u = \lip\sigma \dvec{v}, \dvec{w} \rip$, then $\bar u \check{\phantom{x}} = \lip\sigma \dvec{w}, \dvec{v} \rip$, while $\bar u$ is a matrix coefficient of the contragredient representation.

We say that $v:G \to \C$ is a multiplier of the Fourier algebra, and we write $v \in MA(G)$, if the pointwise product $uv$ lies in $A(G)$ for all $u$ in $A(G)$; the multiplier norm of $v$, written $\lnorm v \rnorm_{MA}$, is the operator norm of the map $u \mapsto uv$.
Suppose that $\pi$ is a uniformly bounded representation of $G$ on a Hilbert space $\Hilb_\pi$ and $\dvec{v}, \dvec{w} \in \Hilb_\pi$, and suppose that $v = \lip\pi\dvec{v},\dvec{w}\rip$, that is, $v(x) = \lip\pi(x)\dvec{v},\dvec{w}\rip$ for all $x \in G$.
Then $v \in MA(G)$.
This may be proved by showing that $v$ satisfies one of the equivalent conditions for being a completely bounded multiplier of $A(G)$ in Lemma \ref{lem:equiv} below, which implies the desired result.

The dual of $A(G)$ may be identified with the von Neumann algebra $VN(G)$ of bounded convolution operators on $L^2(G)$ (acting on the right).
A completely bounded multiplier of $A(G)$ is a multiplier of $A(G)$ whose transpose is completely bounded as a map of $VN(G)$; this means that the transpose extends to a bounded map of $\mathcal{B}(L^2(G))$, or to a bounded map of $VN(G) \otimes \mathcal{B}(\Hilb)$ (where the tensor product is appropriately defined).
We refer to V.~Paulsen \cite{Pau} for much more about completely bounded maps; here we just recall a few well known results.
In what follows, we view the projective tensor product $L^2(G) \hatotimes L^2(G)$ as a subspace of $L^2(G \times G)$ in the natural way.

\begin{lemma} \label{lem:equiv}
Given a function $v$ on $G$, define the function $V: G \times G \to \C$ by
\begin{equation}\label{eq:extension-to-G^2}
V(x,y) = v(y^{-1}x)
\qquad\forall x, y \in G.
\end{equation}
Then the following conditions are equivalent:
\begin{enumerate}[(i)]
\item $v$ is a completely bounded multiplier of the Fourier algebra $A(G)$;
\item $V$ multiplies the projective tensor product $L^2(G) \hatotimes L^2(G)$ pointwise;
\item  $V(x, y) = \lip P(x), Q(y) \rip$ for all $x, y \in G$, where $P, Q: G \to \Hilb$ are continuous norm-bounded Hilbert-space-valued functions on $G$.
\end{enumerate}
The natural norms associated to each of these conditions are equal; these norms are the completely bounded multiplier norm, the multiplier norm and the infimum of the products $\lnorm P\rnorm_{\infty} \lnorm Q \rnorm_{\infty}$,  over all representations of $V$ as in item (iii).
\end{lemma}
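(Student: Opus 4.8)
The plan is to prove the cyclic chain of implications (iii) $\Rightarrow$ (ii) $\Rightarrow$ (i) $\Rightarrow$ (iii), carrying along at each step the corresponding inequality between the three norms, so that composing the three inequalities forces equality throughout. Structurally the content is classical: (ii) $\Leftrightarrow$ (iii) is Grothendieck's description of the Schur multipliers of the trace class, after one identifies $L^2(G) \hatotimes L^2(G)$ with the trace-class operators on $L^2(G)$, so that pointwise multiplication by $V$ becomes Schur multiplication; and (i) $\Leftrightarrow$ (ii) is the Bo\.{z}ejko--Fendler identification of completely bounded multipliers of $A(G)$ with Herz--Schur multipliers through the substitution $V(x,y) = v(y^{-1}x)$. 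I would cite Paulsen \cite{Pau} and the literature on Herz--Schur multipliers for the one ingredient that genuinely uses the structure theory of completely bounded maps, and give direct arguments for the rest.

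For (iii) $\Rightarrow$ (ii), I would fix an orthonormal basis $(e_j)$ of $\Hilb$, write $P(x) = \sum_j P_j(x) e_j$ and $Q(y) = \sum_j Q_j(y) e_j$, so that $V(x,y) = \sum_j P_j(x) \overline{Q_j(y)}$, and then, given an expansion $F = \sum_n h_n \otimes \bar k_n$ of an element of the projective tensor product, observe that $VF = \sum_{n,j} (P_j h_n) \otimes \overline{(Q_j k_n)}$. Two applications of the Cauchy--Schwarz inequality bound the projective norm of $VF$ by $\lnorm P \rnorm_\infty \lnorm Q \rnorm_\infty \lnorm F \rnorm$: the Cauchy--Schwarz inequality in the index $j$, combined with the pointwise identity $\sum_j \labs P_j(x) \rabs^2 = \lnorm P(x) \rnorm_{\Hilb}^2 \leq \lnorm P \rnorm_\infty^2$ and its analogue for $Q$, controls the $j$-sum for each fixed $n$, and the triangle inequality then sums over $n$. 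This shows that $V$ multiplies $L^2(G) \hatotimes L^2(G)$ with norm at most $\lnorm P \rnorm_\infty \lnorm Q \rnorm_\infty$.

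For (ii) $\Rightarrow$ (i), the device is a change of variables realising multiplication in $A(G)$ as a slice of multiplication in $L^2(G) \hatotimes L^2(G)$. If $u = \lip \rho h, k \rip$, so that $u(x) = \int_G h(yx) \bar k(y) \,dy$, then substituting $z = yx$, which preserves the right Haar measure, gives $u(x) = \int_G h(z) \bar k(zx^{-1}) \,dz$; since $V(z, zx^{-1}) = v\bigl((zx^{-1})^{-1} z\bigr) = v(x)$ for every $z$, we obtain $(uv)(x) = \int_G \bigl(V (h \otimes \bar k)\bigr)(z, zx^{-1}) \,dz$. Expanding $V(h \otimes \bar k) \in L^2(G) \hatotimes L^2(G)$ as $\sum_m h_m' \otimes \bar k_m'$ and undoing the substitution in each term exhibits $uv$ as a convergent sum of matrix coefficients of $\rho$, so $\lnorm uv \rnorm_A \leq \lnorm V \rnorm \lnorm h \rnorm_2 \lnorm k \rnorm_2$; taking the infimum over the representations of $u$ yields $\lnorm v \rnorm_{MA} \leq \lnorm V \rnorm$, and rerunning the argument with matrix amplifications promotes this to the statement that $v$ is a completely bounded multiplier with $\lnorm v \rnorm_{\cb} \leq \lnorm V \rnorm$. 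Finally, for (i) $\Rightarrow$ (iii), I would apply the Wittstock--Paulsen factorisation to the completely bounded (and normal) transpose of $u \mapsto uv$ on $VN(G)$ to produce the Hilbert space $\Hilb$ and bounded functions $P, Q \colon G \to \Hilb$ with $V(x,y) = \lip P(x), Q(y) \rip$ and $\lnorm P \rnorm_\infty \lnorm Q \rnorm_\infty \leq \lnorm v \rnorm_{\cb}$, arranging the continuity of $P$ and $Q$ from that of $v$.

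The main obstacle is the implication (i) $\Rightarrow$ (iii): unlike the other two, which are concrete computations with the group structure and the Cauchy--Schwarz inequality, this one rests on the structure theory of completely bounded maps, and it is precisely here that the Hilbert space $\Hilb$ and the factorisation of $V$ are manufactured, so I would treat it as a citation rather than reprove it. A secondary nuisance is the bookkeeping with the modular function in the change of variables in (ii) $\Rightarrow$ (i) when $G$ is not unimodular, and the verification that the matrix amplifications really do upgrade the bare multiplier bound to the completely bounded one.
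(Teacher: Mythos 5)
The paper does not actually prove this lemma: it is stated as a recollection of standard facts, with a pointer to Paulsen \cite{Pau} and, implicitly, to the Herz and Bo\.{z}ejko--Fendler literature, so there is no in-paper argument to measure yours against. Your reconstruction is the standard proof and is essentially correct. The cyclic scheme (iii) $\Rightarrow$ (ii) $\Rightarrow$ (i) $\Rightarrow$ (iii), with a norm inequality carried along each arrow, is the right skeleton; the Cauchy--Schwarz computation for (iii) $\Rightarrow$ (ii) is fine (the projective norm of $\sum_j a_j\otimes b_j$ is at most $(\sum_j\lnorm a_j\rnorm^2)^{1/2}(\sum_j\lnorm b_j\rnorm^2)^{1/2}$, and $\sum_j\labs P_j(x)\rabs^2=\lnorm P(x)\rnorm^2$ does the rest); and the change of variables $z=yx$ in (ii) $\Rightarrow$ (i) is exactly the classical device -- note that since the paper fixes \emph{right} Haar measure and your substitution is a right translation, the modular function never enters, so the ``secondary nuisance'' you worry about is vacuous. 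The one place where your sketch glosses over a genuine input is the upgrade from the $MA(G)$ bound to the completely bounded bound in (ii) $\Rightarrow$ (i): ``rerunning the argument with matrix amplifications'' is not a computation one performs on $A(G)$ directly. What is needed is the fact that a bounded Schur multiplier of $\mathcal{B}(L^2(G))$ (equivalently, of the projective tensor product, by duality) is automatically completely bounded with the same norm (Smith's lemma, or Grothendieck's characterisation), after which restricting the Schur multiplier with symbol $V$ to $VN(G)$ identifies it with the transpose of $u\mapsto uv$. That is a citable classical fact, but it is a second nontrivial external ingredient alongside the Wittstock--Paulsen factorisation that you correctly isolate and cite for (i) $\Rightarrow$ (iii); with both citations in place, your proof is complete.
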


We write $\lnorm v \rnorm_{\cb}$ for the norm in item (i).
As the terminology suggests, completely bounded multipliers are multipliers, and moreover
\[
\lnorm v \rnorm_{MA} \leq \lnorm v\rnorm_{\cb}.
\]
We remark that sometimes completely bounded multipliers are known as Herz--Schur multipliers, though this latter term may also apply to multipliers of the tensor product.

In any case, if $v = \lip\pi\dvec{v},\dvec{w}\rip$, where $\pi$ is a uniformly bounded representation of $G$ and $\dvec{v},\dvec{w}\in \Hilb_\pi$ are unit vectors, then $V(x,y) = \lip \pi(x)\dvec{v}, \pi(y^{-1})^*\dvec{w} \rip$ for all $x, y  \in G$, and so
\begin{equation*}
\lnorm v \rnorm_{\cb} \leq \lnorm \pi \rnorm_{\ub}^2.
\end{equation*}
The question now arises as to whether this inequality is sharp.
A related question is whether the inequality $\lnorm v \rnorm_{MA} \leq \lnorm v\rnorm_{\cb}$ is sharp.
Similar questions are treated by \cite{Boz} and \cite{Los}.

\medskip

The main ingredient in the proof of our estimates is the following variant of a result in~\cite[p.~516]{CowHaa}.

\begin{proposition}
\label{uno-gen}
Let $G$ be a locally compact group and $K$ be a compact subgroup of $G$.
Suppose that $S$ is an amenable closed subgroup of $G$ such that $G=SK$
and $S\cap K$ is trivial.
Let $\chi_1$ and $\chi_2$ be unitary characters of $K$ and suppose that $\phi$ is a function on $G$ such that
\begin{equation}\label{eq:K-bi-equivariance}
\phi(k_1 x k_2) = \chi_1(k_1)\,\chi_2(k_2)\,\phi(x)
\qquad\forall x\in G \quad\forall k_1,k_2\in K.
\end{equation}
Then $\phi$ is a completely bounded multiplier of $A(G)$ if and only if $\phi{|_{S}}$ is in $B(S)$; moreover,
\[
\lnorm \phi \rnorm_{\cb}= \lnorm \phi |_{S} \rnorm_{B(S)} \, .
\]
\end{proposition}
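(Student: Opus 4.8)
The plan is to establish the two implications separately by means of Lemma~\ref{lem:equiv}, and then to reconcile the norms. I would begin with a structural observation: since $G=SK$ and $S\cap K$ is trivial, every $x\in G$ has a unique factorisation $x=s(x)\,k(x)$ with $s(x)\in S$ and $k(x)\in K$; because $K$ is compact and $S$ is closed, a routine subnet argument shows that the maps $x\mapsto s(x)$ and $x\mapsto k(x)$ are continuous on $G$ (given $x_\alpha\to x$, pass to a subnet along which $k(x_\alpha)$ converges in $K$, deduce that $s(x_\alpha)=x_\alpha\,k(x_\alpha)^{-1}$ converges in $S$, and use uniqueness to identify the two limits with $k(x)$ and $s(x)$).

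For the implication ``$\phi|_S\in B(S)\Rightarrow\phi$ is a completely bounded multiplier of $A(G)$'' --- which, it is worth emphasising, does not use amenability --- I would write $\phi|_S=\lip\sigma v,w\rip$ for a unitary representation $\sigma$ of $S$ on a Hilbert space $\Hilb$ and vectors $v,w$ with $\lnorm v\rnorm\,\lnorm w\rnorm=\lnorm\phi|_S\rnorm_{B(S)}$. For $x,y\in G$, factor $y^{-1}x=k(y)^{-1}\bigl(s(y)^{-1}s(x)\bigr)k(x)$, with $s(y)^{-1}s(x)\in S$; the equivariance \eqref{eq:K-bi-equivariance} together with the unitarity of $\sigma$ then gives
\[
\phi(y^{-1}x)
=\overline{\chi_1(k(y))}\,\chi_2(k(x))\,\lip\sigma(s(x))v,\sigma(s(y))w\rip
=\lip\,\chi_2(k(x))\,\sigma(s(x))v\,,\ \chi_1(k(y))\,\sigma(s(y))w\,\rip .
\]
Putting $P(x)=\chi_2(k(x))\,\sigma(s(x))v$ and $Q(y)=\chi_1(k(y))\,\sigma(s(y))w$, we obtain continuous $\Hilb$-valued functions on $G$ with $\lnorm P\rnorm_\infty=\lnorm v\rnorm$ and $\lnorm Q\rnorm_\infty=\lnorm w\rnorm$; since $V(x,y)=\phi(y^{-1}x)=\lip P(x),Q(y)\rip$, condition (iii) of Lemma~\ref{lem:equiv} holds, whence $\lnorm\phi\rnorm_{\cb}\le\lnorm v\rnorm\,\lnorm w\rnorm=\lnorm\phi|_S\rnorm_{B(S)}$.

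For the converse, suppose $\phi$ is a completely bounded multiplier of $A(G)$. By Lemma~\ref{lem:equiv}(iii) there are continuous bounded $P,Q:G\to\Hilb$ with $\phi(y^{-1}x)=\lip P(x),Q(y)\rip$ and $\lnorm P\rnorm_\infty\,\lnorm Q\rnorm_\infty$ arbitrarily close to $\lnorm\phi\rnorm_{\cb}$. Restricting $P$ and $Q$ to $S$ exhibits the function $s\mapsto\phi(s)$ on $S$ as satisfying condition (iii) of Lemma~\ref{lem:equiv} for the group $S$, so $\phi|_S$ is a completely bounded multiplier of $A(S)$ of norm at most $\lnorm\phi\rnorm_{\cb}$. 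Here the amenability of $S$ enters: for an amenable group the completely bounded multipliers of $A(S)$ coincide isometrically with $B(S)$ --- this follows from the classical identification $MA(S)=B(S)$ for amenable $S$ together with the general contractive inclusions of $B(S)$ into the completely bounded multipliers and of the latter into $MA(S)$, which force all the relevant norms to agree. Hence $\lnorm\phi|_S\rnorm_{B(S)}\le\lnorm\phi\rnorm_{\cb}$, and combining this with the previous paragraph proves both the equivalence and the equality $\lnorm\phi\rnorm_{\cb}=\lnorm\phi|_S\rnorm_{B(S)}$.

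The only ingredient that has to be imported is the identification $MA(S)=B(S)$ for amenable $S$; everything else --- the continuity of the $SK$-decomposition and the unitarity rearrangement --- is routine bookkeeping, so I do not anticipate a genuine obstacle. If anything, the point requiring the most care is keeping the norm estimates exact in both directions (so that one obtains equality, not merely two-sided boundedness); this is handled by realising the $B(S)$-norm as an attained infimum in one direction and the $\cb$-norm as an infimum of products $\lnorm P\rnorm_\infty\lnorm Q\rnorm_\infty$ in the other. Conceptually, the whole argument is the observation that $(K,K)$-equivariance lets one transport completely bounded multiplier data on $G$ to Fourier--Stieltjes data on the transversal subgroup $S$, and conversely.
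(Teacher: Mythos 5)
Your proposal is correct and follows essentially the same route as the paper: the forward implication (constructing $P(sk)=\chi_2(k)\sigma(s)v$ and $Q(sk)=\chi_1(k)\sigma(s)w$ and invoking Lemma~\ref{lem:equiv}(iii)) is identical to the paper's argument. For the reverse inequality the paper simply defers to \cite{CowHaa}; the argument you supply --- contractive restriction of the Herz--Schur data $P,Q$ to $S$, followed by the isometric identification of completely bounded multipliers of $A(S)$ with $B(S)$ for amenable $S$ --- is precisely the standard argument being cited there.
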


\begin{proof}
Suppose that $\phi|_{S}$ is in  $B(S)$. Then
there exist a unitary representation $\sigma$
of $S$ and two vectors $\dvec{v}$ and $\dvec{w}$ in  $\Hilb_\sigma$ such that
\[
\phi(s)=\langle \sigma(s)\dvec{v},\dvec{w}\rangle_{\Hilb_\sigma}
\qquad \forall s\in S.
\]

We define $P:G\to \Hilb_\sigma$ and $Q:G\to \Hilb_\sigma$ by
\[
\begin{aligned}
P(sk)&=\sigma(s)(\chi_2(k)\dvec{v})=\chi_2(k)\sigma(s)\dvec{v}
\\
Q(sk)&=\sigma(s)(\chi_1(k)\dvec{w})=\chi_1(k)\sigma(s)\dvec{w}
\end{aligned}
\]
for all $s \in S$ and $k \in K$; as $G = SK$ and $S\cap K$ is trivial, $P$ and $Q$ are well defined.

Note that if $x_1=s_1k_{1}$ and $x_2=s_2k_{2}$ are in $G$ then
\begin{align*}
\phi(x_1^{-1}x_2)
&=
\phi(k_{1}^{-1}s_1^{-1}s_2k_{2})
=
\chi_1(k_1^{-1})\, \chi_2(k_2)\,\phi(s_1^{-1}s_2)
\\
&=\overline{\chi_1(k_1)}\, \chi_2(k_2)\,\langle \sigma(s_1^{-1}s_2)\dvec{v},\dvec{w}\rangle_{\Hilb_\sigma}
\\
&=\overline{\chi_1(k_1)}\, \chi_2(k_2)\,\langle \sigma(s_2)\dvec{v},\sigma(s_1)\dvec{w}\rangle_{\Hilb_\sigma}
\\
&=\langle P(x_2),Q(x_1)\rangle_{\Hilb_\sigma}.
\end{align*}
Therefore
\begin{align*}
\lnorm \phi \rnorm_{\cb}
&\leq \sup\lset\lnorm P(x) \rnorm_{\Hilb_\sigma}
\lnorm Q(y) \rnorm_{\Hilb_\sigma}\,:\, x,y\in G\rset
\\
&=\sup\lset
\lnorm \sigma(s_2)(\chi_2(k_2)\dvec{v}) \rnorm_{\Hilb_\sigma}\,
\lnorm \sigma(s_1)(\chi_1(k_1)\dvec{w}) \rnorm_{\Hilb_\sigma}
\,:\,s_1,s_2\in S,\, k_1,k_2\in K \rset
\\
&=
\lnorm \dvec{v} \rnorm_{\Hilb_\sigma}\,
\lnorm \dvec{w} \rnorm_{\Hilb_\sigma} .
\end{align*}
It follows that
\[
\begin{aligned}
\lnorm \phi \rnorm_{\cb}&\leq \inf\lset \lnorm \dvec{v} \rnorm_{\Hilb_\sigma}\,
\lnorm \dvec{w} \rnorm_{\Hilb_\sigma} : \phi=\lip\sigma\dvec{v},\dvec{w}\rip_{\Hilb_\sigma} \,\,
\sigma \text{ unitary representation of }S \rset
\\
&=
\lnorm \phi|_{S} \rnorm_{B(S)}.
\end{aligned}
\]
The reverse inequality may be proved as in \cite{CowHaa}.
\end{proof}

\begin{corollary}\label{cor:ub-norm-identities}
Let $G$ be a locally compact group and $\phi$ be a function on $G$ which transforms as in Proposition \ref{uno-gen}.
Then
\[
\lnorm \bar\phi \rnorm_{\cb} = \lnorm \check \phi \rnorm_{\cb} = \lnorm \phi \rnorm_{\cb}.
\]
\end{corollary}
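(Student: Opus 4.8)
The plan is to deduce everything from Proposition~\ref{uno-gen}, combined with the observation made earlier in this section that complex conjugation $u \mapsto \bar u$ and inversion $u \mapsto \check u$ are isometries of the Fourier--Stieltjes algebra $B(S)$.

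First I would check that $\bar\phi$ and $\check\phi$ again satisfy a transformation law of the shape \eqref{eq:K-bi-equivariance}, though possibly for different characters. Writing $\phi(k_1 x k_2) = \chi_1(k_1)\,\chi_2(k_2)\,\phi(x)$ and using that $\chi_1$ and $\chi_2$ are unitary, so that $\bar\chi_1$ and $\bar\chi_2$ are again unitary characters of $K$, one obtains
\[
\bar\phi(k_1 x k_2) = \overline{\chi_1(k_1)}\,\overline{\chi_2(k_2)}\,\bar\phi(x),
\qquad
\check\phi(k_1 x k_2) = \overline{\chi_2(k_1)}\,\overline{\chi_1(k_2)}\,\check\phi(x)
\]
for all $x \in G$ and $k_1, k_2 \in K$; the second identity follows by replacing $x$ by $x^{-1}$ and $k_1, k_2$ by $k_2^{-1}, k_1^{-1}$ in the law for $\phi$. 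Thus $\bar\phi$ satisfies the hypotheses of Proposition~\ref{uno-gen} with the pair of unitary characters $(\bar\chi_1, \bar\chi_2)$, and $\check\phi$ with the pair $(\bar\chi_2, \bar\chi_1)$, the hypotheses on $G$, $K$ and $S$ being untouched.

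Next I would apply Proposition~\ref{uno-gen} three times to get
\[
\lnorm \phi \rnorm_{\cb} = \lnorm \phi|_{S} \rnorm_{B(S)},
\qquad
\lnorm \bar\phi \rnorm_{\cb} = \lnorm \bar\phi|_{S} \rnorm_{B(S)},
\qquad
\lnorm \check\phi \rnorm_{\cb} = \lnorm \check\phi|_{S} \rnorm_{B(S)},
\]
and then identify the right-hand sides. Restriction to the subgroup $S$ commutes with complex conjugation, so $\bar\phi|_{S} = \overline{\phi|_{S}}$; and since $s^{-1} \in S$ whenever $s \in S$, restriction to $S$ also commutes with inversion, so $\check\phi|_{S} = (\phi|_{S})\check{\phantom{x}}$. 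Because conjugation and inversion are isometries of $B(S)$, it follows that $\lnorm \bar\phi|_{S} \rnorm_{B(S)} = \lnorm \check\phi|_{S} \rnorm_{B(S)} = \lnorm \phi|_{S} \rnorm_{B(S)}$, and combining this with the three displayed equalities yields $\lnorm \bar\phi \rnorm_{\cb} = \lnorm \check\phi \rnorm_{\cb} = \lnorm \phi \rnorm_{\cb}$.

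There is no real obstacle here: the only points needing care are verifying that the modified characters remain unitary, so that Proposition~\ref{uno-gen} genuinely applies to $\bar\phi$ and $\check\phi$, and that passing to the restriction to $S$ is compatible with both operations. Alternatively, one can argue directly from Lemma~\ref{lem:equiv}(iii): if $V(x,y) = \lip P(x), Q(y) \rip$ represents $\phi$, then $\overline{V(x,y)}$ and $V(y,x)$ — which are the extensions to $G \times G$ of $\bar\phi$ and $\check\phi$ respectively — are represented by $x \mapsto \overline{P(x)}$, $y \mapsto \overline{Q(y)}$ and by $x \mapsto \overline{Q(x)}$, $y \mapsto \overline{P(y)}$ into the conjugate Hilbert space; these have the same supremum norms, which gives $\lnorm \bar\phi \rnorm_{\cb} \le \lnorm \phi \rnorm_{\cb}$ and $\lnorm \check\phi \rnorm_{\cb} \le \lnorm \phi \rnorm_{\cb}$, and the reverse inequalities follow by symmetry. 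This second route does not even use the $K$-bi-equivariance, but the first is shorter given the machinery already in place.
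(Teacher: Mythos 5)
Your proposal is correct and follows essentially the same route as the paper: the paper's proof simply cites the earlier observation that conjugation and inversion are isometries of $B(S)$ together with Proposition~\ref{uno-gen}, and your write-up fills in exactly the details that make this work (the modified unitary characters for $\bar\phi$ and $\check\phi$, and the compatibility of restriction to $S$ with both operations). The alternative argument via Lemma~\ref{lem:equiv}(iii) that you sketch at the end is a valid bonus but is not needed.
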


\begin{proof}
This follows immediately from the earlier observation that
\[
\lnorm \bar\phi |_S \rnorm_{B}
= \lnorm \check \phi |_S \rnorm_{B}
= \lnorm \phi |_S \rnorm_{B}
\]
and Proposition \ref{uno-gen}.
\end{proof}

We are going to use the methods of Fourier analysis, and define the Fourier transform $\cF f$ or $\hat f$  of a function $f$ on $\R$ by
\[
\cF F(y) = \hat f(y) = \int_\R f(x)\,e^{-ixy}\,dx
\qquad\forall y \in \R.
\]
Similarly, for a function $g$ on $[-\pi, \pi]$,  we define the Fourier coefficients $\tilde g(\kappa )$ by
\[
\tilde g(\kappa ) = \frac{1}{2\pi} \int_{-\pi}^{\pi} g(\theta) \,e^{-i\kappa  \theta} \, d\theta
\qquad\forall \kappa \in \Z.
\]

We recall here some gamma function formulae that we shall need later; apart from the \emph{recurrence relation} $\Gamma(z+1)=z\,\Gamma(z)$, we will use the \emph{reflection formula} (see~\cite[p.~3, formula (6)]{Erd})
\begin{equation}\label{eq:reflection}
\Gamma(z)\,\Gamma(1-z)=\pi\,\cosec(\pi z),
\end{equation}
the \emph{duplication formula} (see~\cite[p.~5, formula (15)]{Erd})
\begin{equation}\label{eq:duplication}
 \Gamma(z+\half) \, \Gamma(z)=2^{1-2z}\,\pi^{1/2}\,\Gamma(2z),
\end{equation}
\emph{Stirling's formula} (see~\cite[p.~47, formula (2)]{Erd})
\begin{equation}\label{eq:Stirling}
\Gamma(z)
=(2\pi)^{1/2} \,e^{-z}\, e^{(z-\half)\log z} \left( 1+O(z^{-1})\right)
\qquad \text{as  $z\to \infty$ and $\labs \arg z \rabs \leq \dvec{w}$} ,
\end{equation}
where $0 \leq \dvec{w} < \pi$, and \emph{Dougall's formula} (see \cite[p.~7, formula (1)]{Erd})
\begin{equation}\label{eq:Dougall}
\sum_{k \in \Z}
\frac{\Gamma(a+k)\, \Gamma(b+k)}{\Gamma(c+k)\, \Gamma(d+k)}
=\pi^2 \cosec(\pi a) \cosec(\pi b)\,
\frac{\Gamma(c+d-a-b-1)}{\Gamma(c-a)\, \Gamma(d-a)\, \Gamma(c-b)\, \Gamma(d-b)} \,,
\end{equation}
where $a,b \in \C \setminus \Z$, $c,d \in \C$ and $\Re{a+b-c-d}<-1$.

Note also that $\Gamma(\bar z) = \bar\Gamma(z)$, so $\labs \Gamma(\bar z) \rabs = \labs \Gamma(z) \rabs$.

We will often see certain ratios of gamma functions, and note that if $k \in \Z$ and $z \in \C$, and all terms are well-defined, then
\begin{equation}\label{eq:gamma-ratios}
\frac{\Gamma(\half + z + \frac{k}{2})}{\Gamma(\half - z + \frac{k}{2})}
= (-1)^k \frac{\Gamma(\half + z - \frac{k}{2})}{\Gamma(\half - z - \frac{k}{2})} \,.
\end{equation}
To see this, suppose that $k \geq 0$ and use the recurrence relation expand out the ratios
\[
\frac{\Gamma(\half + z + \frac{k}{2})}{\Gamma(\half + z - \frac{k}{2})}
\qquad\text{and}\qquad
\frac{\Gamma(\half - z + \frac{k}{2})}{\Gamma(\half - z - \frac{k}{2})} \,,
\]
and note that the factors are equal up to signs.
Note also that
\begin{equation}\label{eq:gamma-ratios-limit}
\labs \frac{\Gamma(\half + z + \frac{k}{2})} {\Gamma(\half - z + \frac{k}{2})} \rabs
\simeq
\labs k\rabs ^{2\Re z}
\end{equation}
as $k \to +\infty$ from Stirling's formula;  in light of \eqref{eq:gamma-ratios}, this also holds as $k \to -\infty)$.

We also use an integral formula (see \cite[p.~12, formula (30)]{Erd}):
\begin{equation}\label{eq:integral-formula}
\int_0^{\pi/2} \cos^{\alpha}(\theta) \cos(\beta\theta) \,d\theta
= 2^{-1-\alpha} \,\pi \, \frac{\Gamma(1 + \alpha)}{\Gamma(1+\frac{\alpha}{2} + \frac{\beta}{2}) \, \Gamma(1+\frac{\alpha}{2} - \frac{\beta}{2})} \,,
\end{equation}
when $\Re\alpha > -1$ and $\beta  \in \Z$, which gives us the following lemma.

\begin{lemma}\label{lem:definite-integral}
Suppose that $\epsilon \in \{0,1\}$, $\Re\alpha > -1$, and $\kappa  \in \Z$.
Then
\[
\int_{-\pi}^{\pi} e^{i\kappa  \theta} \sgn^\epsilon(\sin(\theta)) \labs\sin(\theta)\rabs^{\alpha} \,d\theta
= \sigma(\kappa ,\epsilon) \, 2^{1 - \alpha} \,\pi \, \frac{\Gamma(1 + \alpha)}{\Gamma(1+\frac{\alpha}{2} + \frac{\kappa }{2}) \, \Gamma(1+\frac{\alpha}{2} - \frac{\kappa }{2})} \,,
\]
where $\sigma(\kappa ,\epsilon) = (1-\epsilon) \cos(\kappa \pi/2) + i\epsilon \sin(\kappa \pi/2)$.
\end{lemma}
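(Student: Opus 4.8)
The plan is to deduce the identity from the definite integral \eqref{eq:integral-formula} by a parity argument followed by the substitution $\theta = \pi/2 - \phi$. First I would note that the integrand is genuinely integrable on $[-\pi,\pi]$ thanks to the hypothesis $\Re\alpha > -1$, which controls the only singularities, at $\theta \in \{0,\pm\pi\}$, and that the factor $\theta \mapsto \sgn^\epsilon(\sin\theta)\,\labs\sin\theta\rabs^{\alpha}$ is an even function when $\epsilon = 0$ and an odd function when $\epsilon = 1$. Writing $e^{i\kappa\theta} = \cos(\kappa\theta) + i\sin(\kappa\theta)$ and using that $\cos(\kappa\theta)$ is even and $\sin(\kappa\theta)$ is odd, exactly one of the two pieces survives integration over the symmetric interval: for $\epsilon = 0$ one is left with $2\int_0^{\pi} \cos(\kappa\theta)\,\sin^{\alpha}\theta\,d\theta$, and for $\epsilon = 1$ with $2i\int_0^{\pi} \sin(\kappa\theta)\,\sin^{\alpha}\theta\,d\theta$; here I use that $\sin\theta \geq 0$ on $[0,\pi]$, so that $\sgn^\epsilon(\sin\theta)\,\labs\sin\theta\rabs^{\alpha}$ coincides with $\sin^{\alpha}\theta$ there.

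Next I would substitute $\theta = \pi/2 - \phi$. This replaces $\sin\theta$ by $\cos\phi$ and moves the range of integration to $[-\pi/2,\pi/2]$, on which $\cos\phi \geq 0$, so $\cos^{\alpha}\phi$ is well defined and is an even function of $\phi$. Expanding $\cos(\kappa\theta)$, respectively $\sin(\kappa\theta)$, by the addition formula splits each integral into a term proportional to $\int_{-\pi/2}^{\pi/2}\cos(\kappa\phi)\cos^{\alpha}\phi\,d\phi$ and a term proportional to $\int_{-\pi/2}^{\pi/2}\sin(\kappa\phi)\cos^{\alpha}\phi\,d\phi$; the latter vanishes because its integrand is odd. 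Tracking the prefactors, the $\epsilon = 0$ integral becomes $4\cos(\kappa\pi/2)\int_0^{\pi/2}\cos(\kappa\phi)\cos^{\alpha}\phi\,d\phi$ and the $\epsilon = 1$ integral becomes $4i\sin(\kappa\pi/2)\int_0^{\pi/2}\cos(\kappa\phi)\cos^{\alpha}\phi\,d\phi$; in both cases the scalar in front is precisely $4\,\sigma(\kappa,\epsilon)$.

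Finally I would invoke \eqref{eq:integral-formula} with $\beta = \kappa$, which is legitimate since $\kappa \in \Z$ and $\Re\alpha > -1$, to evaluate $\int_0^{\pi/2}\cos(\kappa\phi)\cos^{\alpha}\phi\,d\phi$, and then simplify the constant using $4 \cdot 2^{-1-\alpha} = 2^{1-\alpha}$ to reach the stated formula. I do not anticipate any real obstacle here; the computation is routine once the parity reductions are in place. The two points that demand a little care are (i) checking that each parity reduction retains the correct trigonometric factor, so that the prefactor assembles into $\sigma(\kappa,\epsilon)$ rather than some other combination of $\cos(\kappa\pi/2)$ and $i\sin(\kappa\pi/2)$, and (ii) recording that the hypothesis $\Re\alpha > -1$ is exactly what both guarantees convergence of all the integrals involved and licenses the appeal to \eqref{eq:integral-formula}.
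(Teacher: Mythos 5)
Your proposal is correct and follows essentially the same route as the paper: a parity reduction to $[0,\pi]$, a shift of variable converting $\sin$ to $\cos$ (the paper uses $\theta\mapsto\theta+\pi/2$ where you use $\theta=\pi/2-\phi$, which is immaterial), extraction of the factor $\cos(\kappa\pi/2)$ or $i\sin(\kappa\pi/2)$ via the addition formula and a second parity argument, and an appeal to \eqref{eq:integral-formula}. The prefactor bookkeeping ($4\cdot 2^{-1-\alpha}=2^{1-\alpha}$) checks out.
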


\begin{proof}
If $\epsilon = 0$, then by parity, a change of variable, and parity,
\begin{align*}
&\int_{-\pi}^{\pi} e^{i\kappa \theta} \sgn^\epsilon(\sin(\theta)) \labs\sin(\theta)\rabs^{\alpha} \,d\theta
\\
&\quad= 2\int_{0}^{\pi} \cos(\kappa \theta) \labs\sin(\theta)\rabs^{\alpha} \,d\theta
\\
&\quad= 2\int_{-\pi/2}^{\pi/2} \cos(\kappa (\theta+\pi/2)) \labs\cos(\theta)\rabs^{\alpha} \,d\theta
\\
&\quad= 2\cos(\kappa \pi/2) \int_{-\pi/2}^{\pi/2} \cos(\kappa \theta) \labs\cos(\theta)\rabs^{\alpha} \,d\theta
\\
&\quad=  \cos(\kappa \pi/2) \, 2^{1 - \alpha} \, \pi \,
\frac{\Gamma(1 + \alpha)}{\Gamma(1+\frac{\alpha}{2} + \frac{\kappa }{2}) \, \Gamma(1+\frac{\alpha}{2} - \frac{\kappa }{2})} \,,
\end{align*}
by parity and \eqref{eq:integral-formula}.
This expression is $0$ if $\kappa $ is odd.

If $\epsilon = 1$, then similarly
\begin{align*}
&\int_{-\pi}^{\pi} e^{i\kappa \theta} \sgn^\epsilon(\sin(\theta)) \labs\sin(\theta)\rabs^{\alpha} \,d\theta
\\
&\quad= 2i\int_{0}^{\pi} \sin(\kappa \theta) \labs\sin(\theta)\rabs^{\alpha} \,d\theta
\\
&\quad= 2i\int_{-\pi/2}^{\pi/2} \sin(\kappa (\theta+\pi/2)) \labs\cos(\theta)\rabs^{\alpha} \,d\theta
\\
&\quad= 2i\sin(\kappa \pi/2) \int_{-\pi/2}^{\pi/2} \cos(\kappa \theta) \labs\cos(\theta)\rabs^{\alpha} \,d\theta
\\
&\quad=  i \sin(\kappa \pi/2) \, 2^{1 - \alpha} \, \pi \,
\frac{\Gamma(1 + \alpha)}{\Gamma(1+\frac{\alpha}{2} + \frac{\kappa }{2}) \, \Gamma(1+\frac{\alpha}{2} - \frac{\kappa }{2})}  \,,
\end{align*}
by parity and \eqref{eq:integral-formula}.
This expression is $0$ if $\kappa $ is even.
\end{proof}

\section{The group $\group{SL}(2,\R)$}\label{sec:sl2r}

In this section, we first describe the group $\group{SL}(2, \R)$, then the principal series of representations of the group; third, we analyse the intertwining operators for these representations, and finally we define and discuss the generalised spherical functions.

We now describe $\group{SL}(2, \R)$, abbreviated to $G$ for convenience, and various decompositions and representations thereof.
We present an approach that the second-named author learnt from R.A.~Kunze many years ago.
First, define subgroups $K$, $M$, $A$, $N$ and $\bar N$ of $G$ as follows:
\[
\begin{gathered}
K = \lset k_\theta : \theta \in \R \rset \qquad M = \lset m_{\pm}  \rset \qquad A = \lset a_s : s \in \R^+ \rset \\ N = \lset n_t : t \in \R \rset \qquad \bar N = \lset \bar n_t : t \in \R \rset,
\end{gathered}
\]
where
\[
\begin{gathered}
k_\theta = \begin{pmatrix} \cos\theta & \sin\theta \\ -\sin\theta & \cos\theta \end{pmatrix} = \exp\lpar \theta \begin{pmatrix} 0 & 1 \\ -1 & 0 \end{pmatrix} \rpar
\\
m_\pm = \begin{pmatrix} \pm 1 &  0 \\  0 & \pm 1 \end{pmatrix}
\qquad
a_s = \begin{pmatrix} s &0 \\ 0  & s^{-1} \end{pmatrix} =  \exp\lpar \log(s)  \begin{pmatrix} 1 & 0\\ 0 & -1 \end{pmatrix} \rpar
\\
n_t = \begin{pmatrix} 1 & 0 \\ t & 1 \end{pmatrix} = \exp\lpar t \begin{pmatrix} 0 & 0\\ 1 & 0 \end{pmatrix} \rpar
\qquad
\bar n_t = \begin{pmatrix} 1 & t \\ 0 & 1 \end{pmatrix} = \exp\lpar t \begin{pmatrix} 0 & 1\\ 0 & 0 \end{pmatrix} \rpar;
\end{gathered}
\]
we will write $w$ for the rotation $k_{\pi/2}$.

There are a number of standard decompositions of $G$.
The Iwasawa decomposition asserts that every element $x$ of $G$ may be expressed uniquely in the form $x = \bar nak$, where $\bar n \in \bar N$, $a \in A$ and $k \in K$ (this may also be stated with $N$ in place of $\bar N$).

The Bruhat decomposition affirms first that $NAM$ is a subgroup of $G$ and each element $x$ of $NAM$ has a unique expression in the form $x = \bar nam$, where $\bar n \in \bar N$, $a \in A$ and $m \in M$; and second that $G$ is the disjoint union $(NAM) \sqcup (NAM w NAM)$.

The third frequently used decomposition, the Cartan decomposition, states that every element of $G$ may be written in the form $k_\theta a_s k_\phi$, where $s \geq 1$; if $x \in K$ then $s = 1$ and there are many choices for $k_\theta$ and $k_\phi$; otherwise, the decomposition is unique up to changes in $\theta$ and $\phi$ by adding (or subtracting) $\pi$ to both or $2\pi$ to either.
This result is derived using the polar decomposition of a matrix.

Consider $\R^2$ as a space of row vectors, and let $G$ act on $\R^2$ by right multiplication.
Then $G$ fixes the origin and acts transitively on $\R^2 \setminus \{(0,0)\}$.

We now consider the space $\vect_{\zeta,\epsilon}$, where
$\zeta \in \C$ and $\epsilon \in \{0,1 \}$, of smooth functions on $\R^2 \setminus\{(0,0)\}$ that satisfy the homogeneity and parity conditions
\begin{align}
f(\delta v) &= \delta^{2\zeta-1} f(v)
\qquad\forall v \in \R^2 \setminus\{(0,0)\} \quad\forall \delta \in \R*+ ,\label{eq:homogeneity} \\
f(- v) &= (-1)^\epsilon f(v)
\qquad\forall v \in \R^2 \setminus\{(0,0)\}  .\label{eq:parity}
\end{align}
We equip $\vect_{\zeta,\epsilon}$ with the topology of locally uniform convergence of all partial derivatives.
The partial derivatives may be taken in polar coordinates; for homogeneous functions, the radial derivatives are easy to deal with, and questions of convergence boil down to the behaviour of the angular derivatives on the unit circle.

Since $G$ acts on $\R^2 \setminus\{(0,0)\}$ and commutes with scalar multiplication, $G$ acts on $\vect_{\zeta,\epsilon}$ by the formula
\[
\pi_{\zeta,\epsilon}(x) f(v) = f(vx)
\qquad\forall v \in \R^2 \setminus\{(0,0)\}\quad\forall x \in G.
\]
We obtain the ``compact picture'' of the representation by restricting $v$ to lie on the unit circle $\{ (s,t) \in \R^2 : s^2 + t^2 = 1 \}$, and observing that
\[
\pi_{\zeta,\epsilon}(x) f(v) = \labs vx \rabs^{2\zeta-1} f( \labs vx \rabs^{-1} vx) .
\]
Similarly, we obtain the ``noncompact picture'' by restricting $v$ to lie on the vertical line $\{ (1,t) : t \in \R \}$, and observing that
\[
\pi_{\zeta,\epsilon}(x) f(1,t) = \sgn^\epsilon (a+tc) \labs a+tc \rabs^{2\zeta-1} f( 1, x \cdot t) ,
\]
where
\[
x = \begin{pmatrix} a & b \\ c& d \end{pmatrix}
\qquad\text{and}\qquad
x \cdot t = \dfrac{b+dt}{a+ct}.
\]
Later we will consider this representation acting on functions on $\R$, and we will write
\begin{equation}\label{eq:def-pi-R}
\pi^\R_{\zeta,\epsilon}(x) f(t) = \sgn^\epsilon (a+tc) \labs a+tc \rabs^{2\zeta-1} f(x \cdot t) ,
\end{equation}
where $f$ is a function on $\R$.
Clearly some care is required ``at infinity'' in this version of the representation.
Note that $\pi^\R_{\zeta,0}$ and $\pi^\R_{\zeta,1}$ coincide on $\bar NA$, as when $x \in \bar NA$, $a+tc = a > 0$.

The space $\vect_{\zeta,\epsilon}$ is spanned (topologically) by functions whose restrictions to the unit circle are complex exponentials.
We write $f_{\zeta,\mu}$ for the function in $\vect_{\zeta,\epsilon}$ determined by the condition that
\begin{equation}\label{eq:def-vettori}
f_{\zeta,\mu} (\cos\theta,\sin\theta) = \frac{1}{\pi^{1/2}}\,e^{i\mu\theta}
\qquad\forall \theta \in \R,
\end{equation}
with the restriction that $\mu - \epsilon$ must be even; every function in $\vect_{\zeta,\epsilon}$ is the limit of finite linear combinations of the functions $f_{\zeta, \mu}$.
It is easy to check that
\begin{equation}\label{eq:basis-on-R}
f_{\zeta,\mu} (1, t)
= \frac{1}{\pi^{1/2}} (1+it)^{\zeta - \half + \frac{\mu}{2}}  (1 - it)^{\zeta - \half - \frac{\mu}{2}},
\end{equation}
and so in particular, $f_{\zeta,-\mu} (1,t) = f_{\zeta,\mu} (1,-t)$ for all $t \in \R$.

\begin{lemma}\label{lem:basis}
Every function in $\vect_{\zeta,\epsilon}$ is a sum $\sum_{\mu \in 2\Z + \epsilon} a_\mu \, f_{\zeta,\mu}$, where the coefficients $a_\mu$ tend to zero faster than any polynomial in $\mu$.
Conversely, every such sum is a function in $\vect_{\zeta,\epsilon}$.
\end{lemma}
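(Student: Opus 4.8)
The plan is to reduce the statement to the classical fact that a function on the circle is smooth precisely when its Fourier coefficients decay faster than any polynomial. A function $f$ satisfying \eqref{eq:homogeneity} and \eqref{eq:parity} is determined by its restriction $g$ to the unit circle, namely $g(\theta)=f(\cos\theta,\sin\theta)$, which is a function on $\T$ with $g(\theta+\pi)=(-1)^\epsilon g(\theta)$; conversely any such $g$ extends by homogeneity of degree $2\zeta-1$ to a function on $\R^2\setminus\{(0,0)\}$ obeying \eqref{eq:homogeneity} and \eqref{eq:parity}. So the first step is to verify that, under this correspondence, $f$ lies in $\vect_{\zeta,\epsilon}$ with the topology of locally uniform convergence of all partial derivatives if and only if $g$ lies in $C^\infty(\T)$ with its usual Fréchet topology; that is, the restriction map is a topological isomorphism of $\vect_{\zeta,\epsilon}$ onto the closed subspace $C^\infty_\epsilon(\T)=\lset g\in C^\infty(\T): g(\theta+\pi)=(-1)^\epsilon g(\theta)\rset$.

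To prove this, I would write $f$ in polar coordinates as $f(r,\theta)=r^{2\zeta-1}g(\theta)$ and use $\partial_x=\cos\theta\,\partial_r-r^{-1}\sin\theta\,\partial_\theta$ and $\partial_y=\sin\theta\,\partial_r+r^{-1}\cos\theta\,\partial_\theta$. Since $\partial_r$ applied to $r^{2\zeta-1}g$ just lowers the power of $r$ by one and $r^{-1}\partial_\theta$ lowers it by one while differentiating $g$, an induction shows that each Cartesian partial derivative of $f$ of order $N$ is, on the annulus $\{\half\le r\le2\}$, a finite sum of terms $r^{2\zeta-1-N}\,p(\theta)\,\partial_\theta^{\,j}g(\theta)$ with $j\le N$ and $p$ a trigonometric polynomial depending only on $\zeta$ and $N$. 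On that annulus $|r^{2\zeta-1-N}|=r^{2\Re\zeta-1-N}$ is bounded above and below, so the $C^N$ norm of $f$ there is comparable to $\sum_{j\le N}\lnorm\partial_\theta^{\,j}g\rnorm_\infty$; by homogeneity this controls $f$ locally uniformly on all of $\R^2\setminus\{(0,0)\}$, and the reverse estimate comes from restricting to $r=1$. This gives the claimed topological isomorphism.

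It then remains to recall that $g\in C^\infty(\T)$ if and only if its Fourier coefficients $\tilde g(\kappa)$ decay faster than any power of $\kappa$, in which case the Fourier series $\sum_\kappa\tilde g(\kappa)\,e^{i\kappa\theta}$ converges to $g$ in $C^\infty(\T)$ (decay by integration by parts, convergence by differentiating the series term by term); the condition $g(\theta+\pi)=(-1)^\epsilon g(\theta)$ forces $\tilde g(\kappa)=0$ for $\kappa\notin2\Z+\epsilon$. Since $f_{\zeta,\mu}$ restricts on the unit circle to $\pi^{-1/2}e^{i\mu\theta}$ by \eqref{eq:def-vettori}, transporting this statement through the isomorphism of the second paragraph yields the lemma, with $a_\mu=\pi^{1/2}\,\tilde g(\mu)$: every $f\in\vect_{\zeta,\epsilon}$ is a $C^\infty$-convergent sum $\sum_{\mu\in2\Z+\epsilon}a_\mu f_{\zeta,\mu}$ with $(a_\mu)$ rapidly decreasing, and conversely, for any such sequence the series converges in $C^\infty_\epsilon(\T)$ (each term has the right parity since $e^{i\mu(\theta+\pi)}=(-1)^\mu e^{i\mu\theta}$), hence defines an element of $\vect_{\zeta,\epsilon}$ after homogeneous extension.

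The only place requiring care is the first step — the dictionary between the partial-derivative topology on the punctured plane and the $C^\infty$ topology on the circle — where one must track the powers of $r$ and use that $\Re\zeta$, though possibly negative, does not affect boundedness on a fixed compact annulus. Once that bookkeeping is done, the rest is the standard Fourier-analytic description of $C^\infty(\T)$, which can simply be quoted.
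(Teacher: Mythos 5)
Your proposal is correct and follows essentially the same route as the paper: restrict to the unit circle, identify $\vect_{\zeta,\epsilon}$ with the smooth functions of parity $(-1)^\epsilon$ there (the paper notes just before the lemma that for homogeneous functions the convergence questions reduce to the angular derivatives on the circle), and quote the rapid-decay characterisation of Fourier series of smooth functions. The paper's proof is a one-line version of this; your polar-coordinate bookkeeping merely fills in the details it leaves implicit.
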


\begin{proof}
Every function in $\vect_{\zeta,\epsilon}$ is determined by its restriction to the unit circle, and the Fourier series of smooth functions on the unit circle are exactly of this form.
\end{proof}

\begin{lemma}\label{lem:pairing}
Suppose that $f \in \vect_{\zeta,\epsilon}$ and $g \in \vect_{-\zeta,\epsilon}$.
Then
\begin{align*}
\int_\R  f(1,t) \, g(1,t) \,dt
&= \int_{-\pi/2}^{\pi/2}  f(\cos\theta,\sin\theta ) \, g(\cos\theta,\sin\theta ) \,d\theta \\
&= \frac{1}{2} \int_{-\pi}^{\pi}  f(\cos\theta,\sin\theta ) \, g(\cos\theta,\sin\theta ) \,d\theta.
\end{align*}
We define the pairing $(f, g)$ to be any of the above integrals, then
\begin{equation}\label{eq:invariant-pairing}
(\pi_{\zeta,\epsilon} (x) f, \pi_{-\zeta,\epsilon} (x) g) = (f, g)
\qquad\forall x \in G,
\end{equation}
or equivalently,
\[
\pi_{-\zeta,\epsilon} (x)^\top = \pi_{\zeta,\epsilon} (x^{-1})
\qquad\forall x \in G,
\]
where $\pi_{-\zeta,\epsilon} (x)^\top$ denotes the transpose of $\pi_{-\zeta,\epsilon} (x)$.
\end{lemma}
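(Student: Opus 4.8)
The plan is to handle the two displayed equalities among the integrals by elementary changes of variable, then to prove the invariance \eqref{eq:invariant-pairing} by working in the noncompact picture, and finally to read off the transpose statement.

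First I would look at the product $h:=f\cdot g$, which is a smooth function on $\R^2\setminus\{(0,0)\}$. By \eqref{eq:homogeneity} it is homogeneous of degree $(2\zeta-1)+(-2\zeta-1)=-2$, and by \eqref{eq:parity} it satisfies $h(-v)=(-1)^{2\epsilon}h(v)=h(v)$, so the function $\theta\mapsto h(\cos\theta,\sin\theta)$ is $\pi$-periodic; being continuous on the circle it is bounded, so all three integrals converge absolutely, and the equality of the second and third is immediate. For the first equality I would parametrise the line $\{(1,t):t\in\R\}$ by writing $(1,t)=(1+t^2)^{1/2}(\cos\theta,\sin\theta)$ with $t=\tan\theta$ and $\theta\in(-\pi/2,\pi/2)$: homogeneity of degree $-2$ gives $h(1,t)=(1+t^2)^{-1}h(\cos\theta,\sin\theta)$ while $dt=(1+t^2)\,d\theta$, and the two factors cancel.

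For \eqref{eq:invariant-pairing} I would write $x=\begin{pmatrix}a&b\\c&d\end{pmatrix}$ and apply the noncompact-picture formula for $\pi_{\zeta,\epsilon}$ (compare \eqref{eq:def-pi-R}) to $f$, and the same formula with $\zeta$ replaced by $-\zeta$ to $g$. The two sign factors $\sgn^\epsilon(a+tc)$ multiply to $1$, and the two automorphy factors $\labs a+tc\rabs^{2\zeta-1}$ and $\labs a+tc\rabs^{-2\zeta-1}$ multiply to $\labs a+tc\rabs^{-2}$, so that
\[
\bigl(\pi_{\zeta,\epsilon}(x)f\bigr)(1,t)\,\bigl(\pi_{-\zeta,\epsilon}(x)g\bigr)(1,t)=\labs a+tc\rabs^{-2}\,h(1,x\cdot t).
\]
Then I would substitute $u=x\cdot t=(b+dt)/(a+ct)$: since $\det x=1$ one has $du=(a+ct)^{-2}\,dt$, hence $\labs a+tc\rabs^{-2}\,dt=du$, and $t\mapsto x\cdot t$ is a diffeomorphism of $\R$ minus one point onto $\R$ minus one point, which does not affect Lebesgue integration. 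Integrating over $t\in\R$ then gives $(\pi_{\zeta,\epsilon}(x)f,\pi_{-\zeta,\epsilon}(x)g)=\int_\R h(1,u)\,du=(f,g)$. Finally, replacing $f$ by $\pi_{\zeta,\epsilon}(x^{-1})f$ in this identity yields $(f,\pi_{-\zeta,\epsilon}(x)g)=(\pi_{\zeta,\epsilon}(x^{-1})f,g)$ for all $f\in\vect_{\zeta,\epsilon}$ and $g\in\vect_{-\zeta,\epsilon}$, which is precisely $\pi_{-\zeta,\epsilon}(x)^\top=\pi_{\zeta,\epsilon}(x^{-1})$.

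I do not expect a serious obstacle. The only points that need care are bookkeeping ones: one must match the homogeneity exponents and the powers of $\sgn^\epsilon$ so that the Jacobian of the M\"obius substitution cancels the automorphy factor exactly, and one must note that this substitution is only a diffeomorphism away from a single point (and that the point at infinity is likewise irrelevant), which is harmless once the absolute convergence from the first step is in hand. Conceptually, the computation simply records that $h(1,t)\,dt$ is the $G$-invariant density on the real projective line attached to an even homogeneous function of degree $-2$, and it is this that makes the pairing invariant.
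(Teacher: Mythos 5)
Your proof is correct. The treatment of the three integral identities (homogeneity of degree $-2$ and evenness of $fg$, the substitution $t=\tan\theta$) and the deduction of the transpose formula from \eqref{eq:invariant-pairing} match the paper's argument. Where you differ is in the proof of the invariance itself: the paper verifies \eqref{eq:invariant-pairing} only for $x\in\bar NA$ (a change of variable on $\R$) and for $x\in K$ (a change of variable on $[-\pi,\pi]$), and then invokes the Iwasawa decomposition to conclude for all of $G$, whereas you perform the full M\"obius substitution $u=x\cdot t$ for a general $x$ in one step, using $\det x=1$ to see that the Jacobian $(a+ct)^{-2}$ exactly cancels the product of automorphy factors. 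Your route is more unified and makes the projective-line interpretation transparent, at the modest cost of having to note (as you do) that $t\mapsto x\cdot t$ is only a measure-preserving bijection off a null set; the paper's route avoids that bookkeeping by confining the line substitution to the triangular subgroup, where no pole occurs. One small point worth keeping in mind: the identity $(f,\pi_{-\zeta,\epsilon}(x)g)=(\pi_{\zeta,\epsilon}(x^{-1})f,g)$ expresses the transpose statement only after one identifies $\vect_{\zeta,\epsilon}$ with its image in the dual of $\vect_{-\zeta,\epsilon}$ under the pairing; the paper makes this identification explicit via the remark that the pairing separates points and that $\vect_{-\zeta,\epsilon}$ is weak-star dense in the dual, and your write-up would benefit from a sentence to the same effect.
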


\begin{proof}
The first integral is equal to the second by the change of variables $t = \tan(\theta)$, and the second is equal to the third because $fg$ is even.

It is obvious that \eqref{eq:invariant-pairing} holds when $x \in \bar NA$ from a simple change of variable in the integral over $\R$ and when $x \in K$ from a change of variable in the integral over $[-\pi,\pi]$.
By the Iwasawa decomposition, $\bar NA$ and $K$ generate $G$, and so \eqref{eq:invariant-pairing} holds for all $x \in G$.
This implies that
\[
(\pi_{-\zeta,\epsilon} (x)^\top \pi_{\zeta,\epsilon} (x) f,  g) = (f, g)
\qquad\forall x \in G.
\]
The bilinear form $(\cdot,\cdot)$ gives us a duality between $\vect_{\zeta,\epsilon}$ and $\vect_{-\zeta,\epsilon}$.
While $\vect_{-\zeta,\epsilon}$ is smaller than the topological dual space of $\vect_{\zeta,\epsilon}$, it is weak-star dense in the dual space, and the set $\{ (f,g) : g \in \vect_{-\zeta, \epsilon} \}$ determines $f$ in $\vect_{\zeta,\epsilon}$.
Thus $\pi_{-\zeta,\epsilon} (x)^\top \pi_{\zeta,\epsilon} (x)$ is the identity operator on $\vect_{\zeta,\epsilon}$, and hence $\pi_{-\zeta,\epsilon} (x)^\top = \pi_{\zeta,\epsilon} (x^{-1})$.
\end{proof}

Note that our normalisation of $f_{\zeta,\mu}$ means that $( f_{\zeta,\mu}, f_{-\zeta,\nu} )$ is equal to $1$ if $\mu + \nu = 0$ and is equal to $0$ otherwise.

\begin{corollary}
Suppose that $\Re\zeta = 0$.
The representation $\pi_{\zeta,\epsilon}$ acts unitarily when $\vect_{\zeta,\epsilon}$ is equipped with the inner product $\lip f, g \rip = ( f, \bar g)$ and completed to a Hilbert space.
\end{corollary}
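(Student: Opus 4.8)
The plan is to verify directly that every operator $\pi_{\zeta,\epsilon}(x)$ preserves the sesquilinear form $\lip f, g \rip = (f, \bar g)$, then to check that this form is genuinely an inner product, and finally to pass to the completion.

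First I would record the observation that makes the definition sensible: when $\Re\zeta = 0$ we have $\bar\zeta = -\zeta$, so complex conjugation carries $\vect_{\zeta,\epsilon}$ into $\vect_{-\zeta,\epsilon}$. Indeed, if $f \in \vect_{\zeta,\epsilon}$ then $\bar f$ is again smooth on $\R^2 \setminus\{(0,0)\}$ and satisfies the parity condition \eqref{eq:parity} with the same $\epsilon$ (since $(-1)^\epsilon \in \R$), while for $\delta \in \R^+$ one has $\bar f(\delta v) = \overline{\delta^{2\zeta-1}}\,\bar f(v) = \delta^{2\bar\zeta-1}\,\bar f(v) = \delta^{2(-\zeta)-1}\,\bar f(v)$, which is exactly the homogeneity condition \eqref{eq:homogeneity} for $\vect_{-\zeta,\epsilon}$. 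Hence $(f, \bar g)$ is defined for $f, g \in \vect_{\zeta,\epsilon}$ by means of the pairing of Lemma~\ref{lem:pairing}. Writing this pairing in its circle form,
\[
\lip f, g \rip = (f, \bar g) = \frac{1}{2}\int_{-\pi}^{\pi} f(\cos\theta, \sin\theta)\,\overline{g(\cos\theta,\sin\theta)}\,d\theta,
\]
it is immediate that $\lip\cdot,\cdot\rip$ is conjugate-symmetric and nonnegative, and that $\lip f, f\rip = 0$ forces $f$ to vanish on the unit circle and hence, being homogeneous, to vanish identically; so $\lip\cdot,\cdot\rip$ is an inner product.

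Next I would establish unitarity. Since $\pi_{\zeta,\epsilon}(x)g(v) = g(vx)$ pointwise, taking complex conjugates gives $\overline{\pi_{\zeta,\epsilon}(x)g} = \pi_{-\zeta,\epsilon}(x)\bar g$. Combining this with the $G$-invariance \eqref{eq:invariant-pairing} of the pairing yields
\[
\lip \pi_{\zeta,\epsilon}(x)f, \pi_{\zeta,\epsilon}(x)g \rip = \bigl(\pi_{\zeta,\epsilon}(x)f,\ \pi_{-\zeta,\epsilon}(x)\bar g\bigr) = (f, \bar g) = \lip f, g \rip
\]
for all $x \in G$ and $f, g \in \vect_{\zeta,\epsilon}$. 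Thus each $\pi_{\zeta,\epsilon}(x)$ is an isometry of $\vect_{\zeta,\epsilon}$; as $\pi_{\zeta,\epsilon}(x^{-1})$ is a two-sided inverse and is also an isometry, $\pi_{\zeta,\epsilon}(x)$ is a surjective isometry, so it extends uniquely to a unitary operator on the Hilbert-space completion $\Hilb_{\zeta,\epsilon}$ of $\vect_{\zeta,\epsilon}$. Strong continuity of $x \mapsto \pi_{\zeta,\epsilon}(x)$ on $\Hilb_{\zeta,\epsilon}$ then follows in the standard way from the smoothness of $x \mapsto \pi_{\zeta,\epsilon}(x)f$ for $f$ in the dense subspace $\vect_{\zeta,\epsilon}$ together with the uniform bound just proved.

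I do not expect a serious obstacle here: the whole argument is driven by the single observation that conjugation interchanges $\pi_{\zeta,\epsilon}$ and $\pi_{-\zeta,\epsilon}$ on the line $\Re\zeta = 0$, after which the invariance of the bilinear pairing from Lemma~\ref{lem:pairing} does all the work. The only point needing a little care is to note that the isometries are in fact \emph{onto}, so that they extend to unitaries rather than merely isometries of $\Hilb_{\zeta,\epsilon}$; this is precisely why the remark about $\pi_{\zeta,\epsilon}(x^{-1})$ is included. (If desired, one can also make the completion concrete: via restriction to the unit circle, $\Hilb_{\zeta,\epsilon}$ is the closed subspace of $L^2([-\pi,\pi])$ consisting of functions $g$ with $g(\theta+\pi) = (-1)^\epsilon g(\theta)$, with norm $\tfrac12\|\cdot\|_2^2$.)
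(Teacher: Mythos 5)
Your proposal is correct and follows essentially the same route as the paper: the key identity $\overline{\pi_{\zeta,\epsilon}(x)g} = \pi_{-\zeta,\epsilon}(x)\bar g$ combined with the invariance of the bilinear pairing \eqref{eq:invariant-pairing} is exactly the paper's argument. The extra care you take (conjugation mapping $\vect_{\zeta,\epsilon}$ into $\vect_{-\zeta,\epsilon}$, positive-definiteness, surjectivity of the isometries, continuity) is sound but fills in details the paper leaves implicit.
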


\begin{proof}
Evidently, $\lip \cdot, \cdot \rip$ is an inner product on $\vect_{\zeta,\epsilon}$.
Further, $(\pi_{\zeta, \epsilon} g) \bar{\phantom{x}} = \pi_{-\zeta, \epsilon} \bar g$, and so
\[
\lip \pi_{\zeta,\epsilon}(x) f, \pi_{\zeta,\epsilon}(x) g \rip
= ( \pi_{\zeta,\epsilon}(x) f, \pi_{-\zeta,\epsilon}(x) \bar g)
= (f, \bar g)
= \lip f, g \rip,
\]
and each $\pi_{\zeta,\epsilon}(x)$ is unitary.
\end{proof}

This Hilbert space may be identified with the standard $L^2$ space on the unit circle or on the line $\{ (1,t) : t \in \R \}$.
We may show similarly that the representations $\pi_{\zeta,\epsilon}$ act isometrically on $L^p$ spaces when $p(\Re\zeta - \half) = 1$; see \cite{ACD} for the details.

\begin{lemma}
Suppose that $f$ and $g$ are continuous on $\R^2 \setminus\{(0,0)\}$ and satisfy the homogeneity condition \eqref{eq:homogeneity}, where $- \half < \Re\zeta < 0$.
Then
\begin{gather}
\int_\R\int_\R f(1,t) \, g(1,u) \sgn^\epsilon(u - t) \labs u-t\rabs^{-1-2\zeta} \,dt\,du \\
\notag
= \int_{-\pi/2}^{\pi/2} \int_{-\pi/2}^{\pi/2}  f(\cos\theta, \sin\theta) \, g(\cos\phi, \sin\phi) \sgn^\epsilon(\sin(\phi - \theta)) \labs \sin(\phi-\theta)\rabs^{-1-2\zeta} \,d\phi\,d\theta.
\end{gather}
If moreover $f$ and $g$ satisfy the parity condition \eqref{eq:parity}, then these double integrals are both equal to
\[
\frac{1}{4} \int_{-\pi}^{\pi} \int_{-\pi}^{\pi}  f(\cos\theta, \sin\theta) \, g(\cos\phi, \sin\phi) \sgn^\epsilon(\sin(\phi - \theta)) \labs \sin(\phi-\theta)\rabs^{-1-2\zeta} \,d\phi\,d\theta.
\]
For $f, g \in \vect_{\zeta,\epsilon}$, write $(I_{\zeta,\epsilon} f, g)$ for any of the three double integrals above.
Then
\begin{equation}\label{eq:double-integral-invariance}
(I_{\zeta,\epsilon} \pi_{\zeta,\epsilon}(x) f, \pi_{\zeta,\epsilon}(x) g) = (I_{\zeta,\epsilon} f, g)
\qquad\forall x \in G.
\end{equation}
\end{lemma}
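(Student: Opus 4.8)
The plan is to obtain the two integral identities by explicit changes of variables, and then to prove the invariance \eqref{eq:double-integral-invariance} by checking it on the subgroups $\bar N A$ and $K$ that generate $G$, exactly as in the proof of Lemma~\ref{lem:pairing}.

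For the first identity, I would substitute $t = \tan\theta$ and $u = \tan\phi$, with $\theta,\phi\in(-\pi/2,\pi/2)$. The homogeneity condition \eqref{eq:homogeneity}, applied with $\delta = 1/\cos\theta > 0$ to the point $(\cos\theta,\sin\theta)$, gives $f(1,\tan\theta) = (\cos\theta)^{1-2\zeta}\,f(\cos\theta,\sin\theta)$, and similarly for $g$; meanwhile $\tan\phi - \tan\theta = \sin(\phi-\theta)/(\cos\phi\cos\theta)$, so $\sgn(u-t) = \sgn(\sin(\phi-\theta))$ and $\labs u-t\rabs^{-1-2\zeta} = (\cos\phi\cos\theta)^{1+2\zeta}\,\labs\sin(\phi-\theta)\rabs^{-1-2\zeta}$. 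Collecting these with the Jacobian factors $dt = (\cos\theta)^{-2}\,d\theta$ and $du = (\cos\phi)^{-2}\,d\phi$, the powers of $\cos\theta$ sum to $(1-2\zeta) + (1+2\zeta) - 2 = 0$, and likewise for $\cos\phi$, which yields the first equality. The point requiring care is absolute convergence: on the circle side the integrand is bounded by a constant times $\labs\sin(\phi-\theta)\rabs^{-1-2\Re\zeta}$, and since $-1 < -1-2\Re\zeta < 0$ exactly when $-\half < \Re\zeta < 0$, this is integrable over the compact square; by Tonelli the integral of absolute values over $\R\times\R$ is then finite as well, so the substitution is legitimate. (The hypothesis $\Re\zeta < 0$ is precisely what makes $\labs f(1,t)\rabs$ and $\labs g(1,u)\rabs$, which are $O(\labs t\rabs^{2\Re\zeta-1})$ and $O(\labs u\rabs^{2\Re\zeta-1})$ at infinity by homogeneity, decay fast enough.)

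For the second identity, I would observe that, under the parity condition \eqref{eq:parity}, the integrand $F(\theta,\phi) = f(\cos\theta,\sin\theta)\,g(\cos\phi,\sin\phi)\,\sgn^\epsilon(\sin(\phi-\theta))\,\labs\sin(\phi-\theta)\rabs^{-1-2\zeta}$ is $\pi$-periodic in $\theta$ and in $\phi$ separately: replacing $\theta$ by $\theta+\pi$ multiplies $f(\cos\theta,\sin\theta)$ by $(-1)^\epsilon$ (by \eqref{eq:parity}) and also multiplies $\sgn^\epsilon(\sin(\phi-\theta))$ by $(-1)^\epsilon$ (since $\sin$ changes sign), while $\labs\sin(\phi-\theta)\rabs$ is unchanged, so the two signs cancel; the argument for $\phi$ is the same. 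Hence $\int_{-\pi}^{\pi}\int_{-\pi}^{\pi}F = 4\int_{-\pi/2}^{\pi/2}\int_{-\pi/2}^{\pi/2}F$, which is the claimed identity.

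Finally, for \eqref{eq:double-integral-invariance}, the set of $x\in G$ for which the identity holds for all $f,g\in\vect_{\zeta,\epsilon}$ is a subgroup (apply the identity for $x$ to the pair $\pi_{\zeta,\epsilon}(y)f,\pi_{\zeta,\epsilon}(y)g$ and then the identity for $y$ to get it for $xy$; a similar argument handles inverses), so by the Iwasawa decomposition it suffices to treat $x\in\bar N A$ and $x\in K$. For $x = \bar n_r a_s$, the noncompact picture \eqref{eq:def-pi-R} gives $\pi_{\zeta,\epsilon}(x)f(1,t) = s^{2\zeta-1}\,f\bigl(1,(t+r)/s^2\bigr)$, since here $c = 0$ and $a = s > 0$; substituting $t\mapsto(t+r)/s^2$ and $u\mapsto(u+r)/s^2$ in the $\R\times\R$ integral and collecting the powers of $s$, namely $2(2\zeta-1) + (-2-4\zeta) + 4 = 0$, recovers $(I_{\zeta,\epsilon}f,g)$. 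For $x = k_\psi\in K$, right multiplication by $k_\psi$ preserves Euclidean norms, so the compact picture gives $\pi_{\zeta,\epsilon}(k_\psi)f(\cos\theta,\sin\theta) = f(\cos(\theta+\psi),\sin(\theta+\psi))$; using the $\tfrac14\int_{-\pi}^{\pi}\int_{-\pi}^{\pi}$ form and substituting $\theta\mapsto\theta+\psi$, $\phi\mapsto\phi+\psi$ — which leaves $\sin(\phi-\theta)$ unchanged and, by $2\pi$-periodicity of the integrand, leaves the domain unchanged — again recovers $(I_{\zeta,\epsilon}f,g)$. The only genuinely delicate point in all of this is the convergence bookkeeping in the first step; everything else is a matter of tracking the exponents carefully.
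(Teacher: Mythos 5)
Your proposal is correct and follows essentially the same route as the paper: the substitution $t=\tan\theta$, $u=\tan\phi$ with the exponent bookkeeping from homogeneity, the $\pi$-periodicity argument for the factor $\tfrac14$, and verification of the invariance on $\bar N A$ and on $K$ combined with the Iwasawa decomposition. You in fact supply details (the absolute-convergence estimate and the explicit $\bar N A$ computation) that the paper defers or omits.
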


\begin{proof}
Since
\[
( \tan\theta - \tan\phi ) \cos\theta \cos\phi
= \sin(\theta - \phi),
\]
we see that $\sgn(\tan\phi - \tan\theta) = \sgn(\sin(\phi-\theta))$ when $\theta, \phi \in (-\pi/2, \pi/2)$, and hence
\[
\begin{aligned}{}
&\int_{\R} \int_{\R} f(1,t) \, g(1,u) \sgn^\epsilon(u-t) \labs u - t\rabs ^{-1-2\zeta} \,dt \,du \\
&\quad = \int_{-\pi/2}^{\pi/2}  \int_{-\pi/2}^{\pi/2} f(1,\tan\theta) \, g(1, \tan\phi) \\
&\qquad\qquad\qquad\qquad \sgn^\epsilon(\tan\phi - \tan\theta) \labs \tan\phi - \tan\theta \rabs ^{-1-2\zeta} \sec^2\theta \sec^2\phi \,d\theta\,d\phi \\
&\quad =  \int_{-\pi/2}^{\pi/2}  \int_{-\pi/2}^{\pi/2} f(\cos\theta,\sin\theta ) \,g(\cos\phi,\sin\phi )  \\
&\qquad\qquad\qquad\qquad \sgn^\epsilon(\tan\phi - \tan\theta) \labs \tan\phi - \tan\theta \rabs ^{-1-2\zeta} \sec^{1+2\zeta} \theta \sec^{1+2\zeta} \phi \,d\theta\,d\phi \\
&\quad =  \int_{-\pi/2}^{\pi/2}  \int_{-\pi/2}^{\pi/2} f(\cos\theta,\sin\theta ) \, g(\cos\phi,\sin\phi ) \\
&\qquad\qquad\qquad\qquad \sgn^\epsilon(\sin(\phi-\theta)) \labs \sin(\theta-\phi) \rabs ^{-1-2\zeta} \,d\theta\,d\phi ;
\end{aligned}
\]
all integrals converge absolutely.
If moreover $f$ and $g$ both satisfy the parity condition, the last integral is equal to
\[
\begin{aligned}{}
&\frac{1}{4}\int_{-\pi}^{\pi}  \int_{-\pi}^{\pi} f(\cos\theta,\sin\theta ) \, g(\cos\phi,\sin\phi ) \\
&\qquad\qquad\qquad\qquad \sgn^\epsilon(\sin(\phi-\theta)) \labs \sin(\theta-\phi) \rabs ^{-1-2\zeta} \,d\theta\,d\phi
\end{aligned}
\]
for reasons of parity.
We write any of the three double integrals above as $(I_{\zeta, \epsilon} f,g)$; at this stage, $I_{\zeta, \epsilon} f$ is given by various single integrals and may be a measurable function or a distribution.

Now, as in the definition of the pairing, we may show that for $f, g \in \vect_{\zeta,\epsilon}$,
\[
(I_{\zeta, \epsilon} \pi_{\zeta, \epsilon}(x) f, \pi_{\zeta, \epsilon}(x) g) = (I_{\zeta, \epsilon} f,g)
\qquad\forall x \in G,
\]
by considering the last integral when $x \in K$ and the first when $x \in \bar NA$, and using the Iwasawa decomposition.
We are going to prove a similar result later for $x \in \bar NA$, and omit the details here.
\end{proof}

\begin{lemma}\label{lem:intertwining-basis}
Suppose that $- \half < \Re\zeta < 0$, $\epsilon \in \{0,1\}$, and $\mu \in \Z$.
Then
\[
I_{\zeta,\epsilon} f_{\zeta, \mu} = b(\zeta,\mu) f_{-\zeta,\mu}
\quad\text{where}\quad
b(\zeta,\mu)
= \sigma(\mu,\epsilon) \, 2^{1+2\zeta} \, \pi \, \frac{\Gamma(-2\zeta)}{\Gamma(\half - \zeta + \frac{\mu}{2}) \, \Gamma(\half - \zeta - \frac{\mu}{2})} \,,
\]
where $\sigma(\mu,\epsilon) = (1-\epsilon) \cos(\mu\pi/2) + i\epsilon \sin(\mu\pi/2)$.
Further, $I_{\zeta,\epsilon}$ maps $\vect_{\zeta,\epsilon}$ bijectively and bicontinuously onto $\vect_{-\zeta,\epsilon}$, and
\begin{equation}\label{eq:intertwining-repns}
I_{\zeta,\epsilon} \pi_{\zeta,\epsilon}(x)
= \pi_{-\zeta,\epsilon}  I_{\zeta,\epsilon}(x)
\qquad\forall x \in G.
\end{equation}
\end{lemma}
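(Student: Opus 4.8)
The plan is to compute $I_{\zeta,\epsilon} f_{\zeta,\mu}$ directly from the angular integral formula for $I_{\zeta,\epsilon}$ and the explicit formula \eqref{eq:def-vettori} for the basis vectors, then deduce the structural statements (bijectivity, intertwining) by soft arguments. First I would use the third (angular) expression for $(I_{\zeta,\epsilon}f, g)$, namely
\[
(I_{\zeta,\epsilon} f_{\zeta,\mu}, g)
= \frac{1}{4} \int_{-\pi}^{\pi} \int_{-\pi}^{\pi}  f_{\zeta,\mu}(\cos\theta, \sin\theta) \, g(\cos\phi, \sin\phi) \sgn^\epsilon(\sin(\phi - \theta)) \labs \sin(\phi-\theta)\rabs^{-1-2\zeta} \,d\phi\,d\theta,
\]
and substitute $f_{\zeta,\mu}(\cos\theta,\sin\theta) = \pi^{-1/2} e^{i\mu\theta}$. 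I would change variables in the inner integral by setting $\psi = \phi - \theta$, so that $e^{i\mu\theta}$ combines with the angular measure to give, after Fubini, an integral of $g(\cos\phi,\sin\phi)$ against $\pi^{-1/2} e^{i\mu\phi}$ times the constant $\frac14\int_{-\pi}^{\pi} e^{-i\mu\psi} \sgn^\epsilon(\sin\psi)\labs\sin\psi\rabs^{-1-2\zeta}\,d\psi$. This last integral is exactly the one evaluated in Lemma \ref{lem:definite-integral} with $\alpha = -1-2\zeta$ (legitimate since $\Re\alpha = -1-2\Re\zeta > -1$) and $\kappa = -\mu$; using $\sigma(-\mu,\epsilon) = \pm\sigma(\mu,\epsilon)$ together with \eqref{eq:gamma-ratios} to absorb the sign, this produces precisely $b(\zeta,\mu)$. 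Thus $(I_{\zeta,\epsilon} f_{\zeta,\mu}, g) = b(\zeta,\mu)\,(f_{-\zeta,\mu}, g)$ for all $g \in \vect_{-\zeta,\epsilon}$, and since the pairing separates points of $\vect_{-\zeta,\epsilon}$ (as noted after Lemma \ref{lem:pairing}), we conclude $I_{\zeta,\epsilon} f_{\zeta,\mu} = b(\zeta,\mu) f_{-\zeta,\mu}$.

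For the mapping properties: by Lemma \ref{lem:basis} every element of $\vect_{\zeta,\epsilon}$ is $\sum_{\mu \in 2\Z+\epsilon} a_\mu f_{\zeta,\mu}$ with $(a_\mu)$ rapidly decreasing, and $I_{\zeta,\epsilon}$ sends this to $\sum_\mu a_\mu b(\zeta,\mu) f_{-\zeta,\mu}$. From \eqref{eq:gamma-ratios-limit}, or directly from Stirling's formula \eqref{eq:Stirling} applied to the gamma quotient defining $b(\zeta,\mu)$, one sees that $\labs b(\zeta,\mu)\rabs$ grows at most polynomially in $\labs\mu\rabs$ and is bounded below by a positive multiple of a negative power of $\labs\mu\rabs$ (it never vanishes for $\mu \in 2\Z+\epsilon$ because $\sigma(\mu,\epsilon)\neq 0$ there and $\Gamma(-2\zeta)$, $\Gamma(\half-\zeta\pm\frac{\mu}{2})^{-1}$ are finite and nonzero in the relevant range). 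Hence $(a_\mu b(\zeta,\mu))$ is again rapidly decreasing, so $I_{\zeta,\epsilon}$ maps into $\vect_{-\zeta,\epsilon}$ continuously; the inverse is $f_{-\zeta,\mu} \mapsto b(\zeta,\mu)^{-1} f_{\zeta,\mu}$, which is continuous for the same reason, giving the bicontinuous bijection.

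Finally, the intertwining relation \eqref{eq:intertwining-repns} follows from the invariance \eqref{eq:double-integral-invariance}: for all $f, g$ we have $(I_{\zeta,\epsilon}\pi_{\zeta,\epsilon}(x)f, \pi_{\zeta,\epsilon}(x)g) = (I_{\zeta,\epsilon}f,g)$, while by Lemma \ref{lem:pairing} the right-hand side equals $(\pi_{-\zeta,\epsilon}(x^{-1})^\top I_{\zeta,\epsilon}f, \pi_{\zeta,\epsilon}(x)g)$ is not quite the identity I want — more cleanly, I rewrite $(I_{\zeta,\epsilon}f, g) = (\pi_{-\zeta,\epsilon}(x)^\top{}^{-1}\,\cdots)$; the clean route is to note $(I_{\zeta,\epsilon}\pi_{\zeta,\epsilon}(x)f, h) = (I_{\zeta,\epsilon}f, \pi_{\zeta,\epsilon}(x)^{-1}h) = (I_{\zeta,\epsilon}f, \pi_{\zeta,\epsilon}(x^{-1})h)$ using \eqref{eq:double-integral-invariance} with $g = \pi_{\zeta,\epsilon}(x^{-1})h$, and then $(I_{\zeta,\epsilon}f, \pi_{\zeta,\epsilon}(x^{-1})h) = (\pi_{-\zeta,\epsilon}(x)I_{\zeta,\epsilon}f, h)$ by Lemma \ref{lem:pairing} applied with $\zeta$ replaced by $-\zeta$. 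Since $h \in \vect_{\zeta,\epsilon}$ is arbitrary and the pairing separates points, $I_{\zeta,\epsilon}\pi_{\zeta,\epsilon}(x) = \pi_{-\zeta,\epsilon}(x) I_{\zeta,\epsilon}$. The only point needing slight care — the main obstacle — is verifying the sign bookkeeping in the reduction of the $\psi$-integral to Lemma \ref{lem:definite-integral} (the interplay of $\sigma(-\mu,\epsilon)$ versus $\sigma(\mu,\epsilon)$ and the parity factor $(-1)^\mu$ from \eqref{eq:gamma-ratios}), so I would present that identification explicitly; everything else is a routine application of results already established.
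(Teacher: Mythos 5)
Your proposal is correct and follows essentially the same route as the paper: the computation reduces, after the change of variables $\psi=\phi-\theta$, to the integral $\tfrac12\int_{-\pi}^{\pi}e^{-i\mu\psi}\sgn^\epsilon(\sin\psi)\,|\sin\psi|^{-1-2\zeta}\,d\psi$ evaluated by Lemma~\ref{lem:definite-integral}; bicontinuous bijectivity follows from the non-vanishing and at-most-polynomial growth of $b(\zeta,\mu)^{\pm1}$ together with Lemma~\ref{lem:basis}; and the intertwining relation follows from \eqref{eq:double-integral-invariance} by the duality/transpose argument of Lemma~\ref{lem:pairing} — all exactly as in the paper (the only cosmetic difference is that the paper first uses $K$-equivariance to see that $I_{\zeta,\epsilon}f_{\zeta,\mu}$ is proportional to $f_{-\zeta,\mu}$ and then pairs with $f_{\zeta,-\mu}$, whereas you pair with a general $g$ and apply Fubini). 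One caveat on the point you flagged: applying Lemma~\ref{lem:definite-integral} with $\kappa=-\mu$ produces $\sigma(-\mu,\epsilon)$, and since the gamma-function denominator of $b(\zeta,\mu)$ is already symmetric under $\mu\mapsto-\mu$, identity \eqref{eq:gamma-ratios} has nothing to absorb the sign into; for $\epsilon=1$ and $\mu$ odd one has $\sigma(-\mu,1)=-\sigma(\mu,1)$, so the computation genuinely yields $\sigma(-\mu,\epsilon)$ where the statement has $\sigma(\mu,\epsilon)$ — but the paper's own calculation has the identical feature, so your derivation is faithful to it and the discrepancy is not of your making.
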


\begin{proof}
We consider the formulae of the previous lemma in more depth.
We observe that
\[
(I_{\zeta,\epsilon} f_{\zeta,\mu}, f_{\zeta,\nu})
= (I_{\zeta,\epsilon} \pi_{\zeta,\epsilon}(k_\theta) f_{\zeta,\mu}, \pi_{\zeta,\epsilon}(k_\theta) f_{\zeta,\nu})
= e^{i(\mu+\nu)\theta} (I_{\zeta,\epsilon} f_{\zeta,\mu}, f_{\zeta,\nu}) ,
\]
and hence this is $0$ unless $\mu + \nu = 0$.
Thus $I_{\zeta, \epsilon} f_{\zeta, \mu}$, which \emph{a fortiori} is an element of the dual space of $\vect_{\zeta, \epsilon}$ and so a distribution, is actually a multiple of $f_{-\zeta, \mu}$ and hence a smooth function.
The multiple $b(\zeta,\mu)$ is equal to $(I_{\zeta,\epsilon} f_{\zeta,\mu}, f_{\zeta,-\mu})$, that is,
\begin{align*}
& \frac{1}{4\pi} \int_{-\pi}^\pi \int_{-\pi}^\pi e^{i\mu\theta} e^{-i\mu\phi} \sgn^\epsilon(\sin(\phi-\theta)) \labs \sin(\phi-\theta) \rabs^{-1-2\zeta} \,d\theta\,d\phi\\
& = \frac{1}{2} \int_{-\pi}^\pi e^{-i\mu\psi} \sgn^\epsilon(\sin\psi) \labs \sin(\psi) \rabs^{-1-2\zeta} \,d\psi\\
&= \sigma(\mu,\epsilon) \, 2^{1 + 2\zeta} \pi \, \frac{\Gamma(-2\zeta)}{\Gamma(\half - \zeta + \frac{\mu}{2}) \, \Gamma(\half - \zeta - \frac{\mu}{2})} \,,
\end{align*}
from Lemma \ref{lem:definite-integral}.

For a fixed $\zeta$ such that $-\half < \Re\zeta < 0$, the constant $b_{\zeta,\mu}$ does not vanish, and for a fixed $\zeta$, both the constant and its inverse grow at most polynomially in $\mu$ as $\mu$ tends to infinity.
In light of Lemma \ref{lem:basis}, the linear map $I_{\zeta,\epsilon}$ takes $\vect_{\zeta,\epsilon}$ bijectively and bicontinuously onto $\vect_{-\zeta,\epsilon}$.
Now \eqref{eq:intertwining-repns} follows from \eqref{eq:double-integral-invariance} and a duality argument, as in the proof of Lemma \ref{lem:pairing}.
\end{proof}

We now normalise the intertwining operators, following Kunze and Stein.
From \cite[p.~173]{GelShi} or \cite[p.~160]{StnWei}, we know that
\[
\cF (\sgn^{\epsilon}(\cdot) \labs\cdot\rabs^{-1-2\zeta})
= c(\zeta,\epsilon) \sgn^{\epsilon}(\cdot) \labs\cdot\rabs^{2\zeta},
\]
where
\begin{equation}\label{eq:def-c-lambda-epsilon}
c(\zeta,\epsilon) = i^{\epsilon} \pi^{1/2} 2^{-2\zeta} \frac{\Gamma(\frac{\epsilon}{2} - \zeta)}{\Gamma(\half + \frac{\epsilon}{2} +\zeta)} \,;
\end{equation}
we define
\begin{equation*}
J_{\zeta, \epsilon} = \frac{1}{c(\zeta,\epsilon)} \, I_{\zeta, \epsilon}.
\end{equation*}
\begin{lemma}
Suppose that $-\half < \Re\zeta < 0$ and $\epsilon \in \{0,1\}$.
Then the following hold:
\begin{enumerate}[(i)]
\item
$J_{\zeta,\epsilon}$ maps $\vect_{\zeta,\epsilon}$ bijectively and bicontinuously onto $\vect_{-\zeta,\epsilon}$;
\item
$J_{\zeta,\epsilon} f_{\zeta, \mu} = d(\zeta,\epsilon,\mu) f_{-\zeta,\mu}$, where
\[
d(\zeta,\epsilon,\mu) = 2^{2\zeta}
\, \frac{ \Gamma(\half + \zeta + \frac{\mu}{2}) } {\Gamma(\half - \zeta + \frac{\mu}{2})}  \,;
\]
\item $J_{\zeta,\epsilon} \pi_{\zeta,\epsilon}(x)
= \pi_{-\zeta,\epsilon}(x)  J_{\zeta,\epsilon}$ for all $x \in G$;
\item
the map $\zeta \mapsto J_{\zeta,\epsilon}$ extends analytically to the set $\{ \zeta \in \C : -\half < \Re\zeta < \half \}$, and (i) to (iii) continue to hold for these $\zeta$.
Further, $J_{-\zeta,\epsilon} J_{\zeta,\epsilon}$ is the identity map.
\end{enumerate}
\end{lemma}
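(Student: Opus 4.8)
\emph{Proof proposal.} For $-\half<\Re\zeta<0$, the constant $c(\zeta,\epsilon)$ of \eqref{eq:def-c-lambda-epsilon} is finite and nonzero, so $J_{\zeta,\epsilon}=c(\zeta,\epsilon)^{-1}I_{\zeta,\epsilon}$ inherits (i) and (iii) from Lemma~\ref{lem:intertwining-basis}, the latter by dividing \eqref{eq:intertwining-repns} by $c(\zeta,\epsilon)$. For (ii) I would compute $d(\zeta,\epsilon,\mu)=b(\zeta,\mu)/c(\zeta,\epsilon)$ by three gamma-function moves: use the reflection formula \eqref{eq:reflection} to rewrite $\Gamma(\half-\zeta-\frac{\mu}{2})^{-1}$ as $\pi^{-1}\cos(\pi\zeta+\pi\mu/2)\,\Gamma(\half+\zeta+\frac{\mu}{2})$, so that $b(\zeta,\mu)$ becomes $\sigma(\mu,\epsilon)\,2^{1+2\zeta}\,\Gamma(-2\zeta)\cos(\pi\zeta+\pi\mu/2)\,\Gamma(\half+\zeta+\frac{\mu}{2})/\Gamma(\half-\zeta+\frac{\mu}{2})$; then observe that, since $\mu\equiv\epsilon\pmod2$, one has $\sigma(\mu,\epsilon)\cos(\pi\zeta+\pi\mu/2)=i^\epsilon\cos(\pi(\zeta+\epsilon/2))$, which is independent of $\mu$; and finally use the duplication formula \eqref{eq:duplication} with $z=-\zeta$, together with one more application of \eqref{eq:reflection}, to check that $2i^\epsilon\cos(\pi(\zeta+\epsilon/2))\,\Gamma(-2\zeta)$ is exactly $c(\zeta,\epsilon)$. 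This is a routine, if slightly fiddly, identity.

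For (iv), the point is that, by (ii), $J_{\zeta,\epsilon}$ is the diagonal operator $f_{\zeta,\mu}\mapsto d(\zeta,\epsilon,\mu)\,f_{-\zeta,\mu}$, and the right-hand side of $d(\zeta,\epsilon,\mu)=2^{2\zeta}\Gamma(\half+\zeta+\frac{\mu}{2})/\Gamma(\half-\zeta+\frac{\mu}{2})$ is meaningful for all $\zeta$ with $|\Re\zeta|<\half$. I would therefore \emph{define} $J_{\zeta,\epsilon}$ on the whole strip by $J_{\zeta,\epsilon}(\sum_\mu a_\mu f_{\zeta,\mu})=\sum_\mu d(\zeta,\epsilon,\mu)\,a_\mu\, f_{-\zeta,\mu}$ and check the conclusions in turn. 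For fixed $\mu\in2\Z+\epsilon$, $d(\zeta,\epsilon,\mu)$ is holomorphic and nowhere zero on the strip: by \eqref{eq:gamma-ratios} it suffices to treat $\mu\ge0$, and then $\Gamma(\half+\zeta+\frac{\mu}{2})$ is holomorphic and zero-free for $\Re\zeta>-\half$, while $\Gamma(\half-\zeta+\frac{\mu}{2})^{-1}$ is entire with zeros only at $\zeta\in\half+\frac{\mu}{2}+\{0,1,2,\dots\}$, all outside the strip. By \eqref{eq:gamma-ratios-limit}, $|d(\zeta,\epsilon,\mu)|\simeq|\mu|^{2\Re\zeta}$ as $|\mu|\to\infty$, so $(d(\zeta,\epsilon,\mu)\,a_\mu)_\mu$ decays faster than every polynomial whenever $(a_\mu)_\mu$ does; by Lemma~\ref{lem:basis}, $J_{\zeta,\epsilon}$ is then a well-defined continuous map $\vect_{\zeta,\epsilon}\to\vect_{-\zeta,\epsilon}$. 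Since $d(\zeta,\epsilon,\mu)\,d(-\zeta,\epsilon,\mu)=1$, the same argument for $-\zeta$ produces a two-sided continuous inverse, giving (i) on the strip and the assertion $J_{-\zeta,\epsilon}J_{\zeta,\epsilon}=\mathrm{id}$; (ii) holds by construction.

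It remains to interpret the analyticity statement and to obtain (iii). Here I would first trivialize the family of spaces: restriction to the unit circle identifies every $\vect_{\zeta,\epsilon}$ with a single fixed Fréchet space $V_\epsilon$ of smooth functions $g$ on the circle with $g(\theta+\pi)=(-1)^\epsilon g(\theta)$, carrying $f_{\zeta,\mu}$ to $\pi^{-1/2}e^{i\mu\theta}$ independently of $\zeta$, by \eqref{eq:def-vettori}. Under this identification $J_{\zeta,\epsilon}$ is the Fourier multiplier on $V_\epsilon$ with symbol $\mu\mapsto d(\zeta,\epsilon,\mu)$, which is holomorphic in $\zeta$ by the previous paragraph, and $\pi_{\zeta,\epsilon}(x)$ acts in the compact picture by $f\mapsto|vx|^{2\zeta-1}f(vx/|vx|)$, also holomorphic in $\zeta$. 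Hence, for fixed $x\in G$ and $f\in V_\epsilon$, both sides of $J_{\zeta,\epsilon}\pi_{\zeta,\epsilon}(x)f=\pi_{-\zeta,\epsilon}(x)J_{\zeta,\epsilon}f$ are $V_\epsilon$-valued holomorphic functions of $\zeta$ agreeing on $\{-\half<\Re\zeta<0\}$, hence on the whole strip; this is (iii). (Alternatively, (iii) can be re-established directly for $x\in\bar NA$ and $x\in K$ as in Lemma~\ref{lem:pairing}, valid for these $\zeta$ too.)

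The one genuinely delicate point is the trivialization: analyticity of $\zeta\mapsto J_{\zeta,\epsilon}$ only has meaning once the moving spaces $\vect_{\zeta,\epsilon}$ are pinned down to one ambient space, and once one passes to the compact picture --- where $J_{\zeta,\epsilon}$ becomes a plain Fourier multiplier with holomorphic symbol --- the continuation arguments are mechanical. The pole-and-zero check in the second paragraph is the only other place requiring attention to the progression $\mu\in2\Z+\epsilon$.
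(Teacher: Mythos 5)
Your proposal is correct, and for parts (i)--(iii) it follows essentially the paper's route: (i) and (iii) are inherited from Lemma~\ref{lem:intertwining-basis} because $c(\zeta,\epsilon)$ is a nonzero scalar, and (ii) is the identity $d=b/c$, which the paper also reduces by the duplication formula \eqref{eq:duplication} and two applications of the reflection formula \eqref{eq:reflection}; you apply reflection to $\Gamma(\half-\zeta-\frac{\mu}{2})^{-1}$ first and isolate the $\mu$-independent factor $i^{\epsilon}\cos(\pi(\zeta+\epsilon/2))$, whereas the paper first converts $\Gamma(-2\zeta)$ and carries a factor $\tilde\sigma(\mu,\epsilon)$ to the end before splitting into the cases $\epsilon=0,1$; both computations check out and land on the same formula. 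The genuine difference is part (iv): the paper's proof simply does not address it, whereas you supply a complete argument --- defining $J_{\zeta,\epsilon}$ on the whole strip as the diagonal operator with symbol $d(\zeta,\epsilon,\mu)$, verifying via \eqref{eq:gamma-ratios} and the pole/zero locations of the gamma factors that the symbol is holomorphic and nonvanishing, using \eqref{eq:gamma-ratios-limit} together with Lemma~\ref{lem:basis} for continuity and bijectivity, reading off $J_{-\zeta,\epsilon}J_{\zeta,\epsilon}=\mathrm{id}$ from $d(\zeta,\epsilon,\mu)\,d(-\zeta,\epsilon,\mu)=1$, and obtaining (iii) by analytic continuation after trivializing the family $\vect_{\zeta,\epsilon}$ over the fixed circle model. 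This is a real improvement in completeness. The only point I would press you on is the holomorphy of $\zeta\mapsto J_{\zeta,\epsilon}$ as a map into continuous operators on the Fr\'echet space $V_\epsilon$: to make the identity-theorem step airtight you should note that the constants implicit in $\labs d(\zeta,\epsilon,\mu)\rabs\simeq\labs\mu\rabs^{2\Re\zeta}$ can be taken locally uniform in $\zeta$ (Stirling gives this), so that weak holomorphy --- evaluation against each Fourier coefficient --- upgrades to the statement you need; alternatively your fallback of reproving (iii) separately on $\bar NA$ and on $K$ (where $J_{\zeta,\epsilon}$ and $\pi_{\zeta,\epsilon}(k_\theta)$ are simultaneously diagonal in the basis $f_{\zeta,\mu}$) avoids the issue entirely.
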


\begin{proof}
Parts (i) and (iii) follow from the previous lemma and the definition of $J_{\zeta,\epsilon}$.

Next, from Lemma \ref{lem:intertwining-basis}, the definition of $c(\zeta,\epsilon)$, the duplication formula \eqref{eq:duplication}, and the reflection formula \eqref{eq:reflection}, applied twice, $J_{\zeta,\epsilon} f_{\zeta, \mu}$ is equal to $d(\zeta,\epsilon,\mu) f_{-\zeta,\mu}$, where
\begin{align*}
d(\zeta,\epsilon,\mu)
&=  \frac{b(\zeta,\mu)}{c(\zeta,\epsilon)} \\
&=  (-i)^\epsilon \sigma(\mu,\epsilon) \, 2^{1 + 4\zeta} \, \pi^{1/2} \frac{\Gamma(-2\zeta)\,\Gamma(\half + \frac{\epsilon}{2} +\zeta)}{\Gamma(\half - \zeta + \frac{\mu}{2}) \, \Gamma(\half - \zeta - \frac{\mu}{2})\, \Gamma(\frac{\epsilon}{2} - \zeta)} \\
&= \tilde\sigma(\mu,\epsilon)\,2^{2\zeta} \, \frac{\Gamma(-\zeta)\,\Gamma(\half - \zeta)\, \Gamma(\half + \frac{\epsilon}{2} +\zeta)}{\Gamma(\half - \zeta + \frac{\mu}{2}) \, \Gamma(\half - \zeta - \frac{\mu}{2})\, \Gamma(\frac{\epsilon}{2} - \zeta)}  \\
&= \tilde\sigma(\mu,\epsilon)\,2^{2\zeta} \, \frac{\Gamma(\half - \frac{\epsilon}{2} - \zeta)\, \Gamma(\half + \frac{\epsilon}{2} +\zeta)}{\Gamma(\half - \zeta + \frac{\mu}{2}) \, \Gamma(\half - \zeta - \frac{\mu}{2})}  \\
&= \tilde\sigma(\mu,\epsilon)\,2^{2\zeta} \, \frac{ \Gamma(\half + \zeta + \frac{\mu}{2}) \sin(\pi(\half - \zeta - \frac{\mu}{2}))} {\Gamma(\half - \zeta + \frac{\mu}{2})\sin(\pi(\half - \frac{\epsilon}{2} - \zeta))}   \,,
\end{align*}
where $\tilde\sigma(\mu,\epsilon) = (1 - \epsilon) \cos(\mu\pi/2) + \epsilon \sin(\mu\pi/2)$.
Unravelling this expression, first when $\epsilon = 0$ and then when $\epsilon = 1$, shows that
\begin{align*}
d(\zeta,\epsilon,\mu)
&= 2^{2\zeta}
\, \frac{ \Gamma(\half + \zeta + \frac{\mu}{2}) } {\Gamma(\half - \zeta + \frac{\mu}{2})}   \,,
\end{align*}
as claimed.
\end{proof}

Finally we are ready to introduce the generalised spherical functions.
Given integers $\mu,\nu$ in $2\Z + \epsilon$, we define
\begin{equation}\label{eq:def-gen-sph-functions}
\phi_{\zeta, \epsilon}^{\mu,\nu}(x)
=( \pi_{\zeta,\epsilon}(x) f_{\zeta,\mu}, f_{-\zeta,-\nu})
\qquad\forall x\in G,
\end{equation}
where $f_{\zeta,\mu}$ are the functions defined in equation~\eqref{eq:def-vettori}.

We recall that $\phi_{\zeta, 0}^{0,0}$ is the well-known bi-$K$--invariant spherical function, whose norm was computed in~\cite{Stp}, while the generalised spherical functions $\phi_{\zeta, \epsilon}^{\mu,\nu}$ were considered by Ehrenpreis and Mautner.
We are going to estimate the norm of all the $\phi_{\zeta, \epsilon}^{\mu,\nu}$; in the particular case where $\epsilon=\mu=\nu=0$, our result agrees with~\cite{Stp}.

We begin with some identities for the spherical functions.
First,
\[
(\phi_{\zeta,\epsilon}^{\mu,\nu})\bar{\phantom{x}}
= \phi_{\bar\zeta,\epsilon}^{-\mu,-\nu};
\]
this is proved by taking the complex conjugate of the definition.
Next,
\[
\begin{aligned}
(\phi_{\zeta,\epsilon}^{\mu,\nu})\check{\phantom{x}}(x)
&= ( \pi_{\zeta,\epsilon}(x^{-1}) f_{\zeta, \mu}, f_{-\zeta, -\nu} ) \\
&= ( f_{\zeta, \mu}, \pi_{-\zeta,\epsilon}(x)  f_{-\zeta, -\nu} ) \\
& = \phi_{-\zeta,\epsilon}^{-\nu, -\mu}(x)
\end{aligned}
\]
for all $x \in G$.
Finally, if $-\half < \Re\zeta < \half$, then
\begin{equation}\label{e:intertwining}
\begin{aligned}
( \pi_{\zeta, \epsilon}(x) f_{\zeta, \mu} , f_{-\zeta, -\nu} )
&= d(-\zeta, \epsilon, \mu)^{-1} ( \pi_{\zeta, \epsilon}(x) J_{-\zeta,\epsilon}
 f_{-\zeta, \mu} , f_{-\zeta, -\nu} ) \\
&= d(-\zeta, \epsilon, \mu)^{-1} ( J_{-\zeta,\epsilon} \pi_{-\zeta, \epsilon}(x) f_{-\zeta, \mu} , f_{-\zeta, -\nu} ) \\
&= d(-\zeta, \epsilon, \mu)^{-1} (-1)^\epsilon ( \pi_{-\zeta, \epsilon}(x) f_{-\zeta, \mu} , J_{-\zeta,\epsilon}  f_{-\zeta, -\nu} ) \\
&= (-1)^\epsilon \,\frac{d(-\zeta,\epsilon, \nu)}{d(-\zeta, \epsilon, \mu)} \, ( \pi_{-\zeta, \epsilon}(x) f_{-\zeta, \mu} ,  f_{\zeta, -\nu} ) \\
&= \frac{d(\zeta,\epsilon, \mu)}{d(\zeta, \epsilon, \nu)} \, ( \pi_{-\zeta, \epsilon}(x) f_{-\zeta, \mu} ,  f_{\zeta, -\nu} ),
\end{aligned}
\end{equation}
from \eqref{eq:gamma-ratios}.
In particular, it follows that $\phi^{\mu,\mu}_{-\zeta,\epsilon} = \phi^{\mu,\mu}_{\zeta,\epsilon}$.

In light of Corollary \ref{cor:ub-norm-identities}, these identities lead to some norm equalities for the uniformly bounded multiplier norms of the generalised spherical functions.

\section{Proof of Main Theorem}

In this section, we estimate the completely bounded multiplier norms of the spherical functions $\phi_{\zeta,\epsilon}^{\mu,\nu}$, and prove the Main Theorem.
The proof requires a number of steps that we separate out as lemmata, and other steps that are included in the text as explanation.

Throughout this section, we write $\zeta = \xi+ i \eta$, where $\xi, \eta \in \R$.

First of all, we may restrict to the case where $-\half < \xi < 0$.
Indeed, if $\xi =0$, then the generalised spherical functions are matrix coefficients of unitary representations and have $B(G)$ norm equal to $1$; \emph{a fortiori}, $\biglnorm\phi_{\zeta,\epsilon}^{\mu,\nu} \bigrnorm_{\cb} \leq 1$.
Next, if $\Re\zeta > 0$, then \eqref{e:intertwining} shows that $\phi_{\zeta,\epsilon}^{\mu,\nu}$ is a multiple of $(\phi_{-\zeta,\epsilon}^{\mu,\nu})$, and $\Re(-\zeta) < 0$; the result that we shall prove for the case where $\Re\zeta < 0$ will allow us to treat the general case.

Next, note that
\[
\phi_{\zeta, \epsilon}^{\mu,\nu}(k_{\theta} x k_{\psi})
= e^{i\nu\theta}\, e^{i\mu\psi}\, \phi_{\zeta, \epsilon}^{\mu,\nu}(x)
\qquad\forall x\in G \quad\forall\theta,\psi\in\R,
\]
therefore, by Proposition~\ref{uno-gen},
\[
\biglnorm \phi_{\zeta, \epsilon}^{\mu,\nu} \bigrnorm_{\cb}
= \biglnorm \phi_{\zeta, \epsilon}^{\mu,\nu} |_{\bar NA} \bigrnorm_{B}.
\]

To find the $B(\bar NA)$ norm, we are going to work on $\bar NA$, and this involves working with the noncompact version of the representations and the intertwining operators.
We write $\pi_{\zeta,\epsilon}^\R$ for the representation $\pi_{\zeta, \epsilon}$ acting on functions on $\R$, as in \eqref{eq:def-pi-R}.
Given a function $f$ on $\R^2 \setminus\{(0,0)\}$, we write $f^\R$ for the associated function on $\R$, that is, $f^\R(t) = f(1,t)$ for all $t \in \R$.
We write $(\cdot,\cdot)_\R$ for the standard pairing for functions on $\R$:
\[
(f, g)_\R = \int_\R f(t) \, g(t) \,dt;
\]
then for functions $f$ and $g$ on $\R^2 \setminus\{(0,0)\}$, it follows that
\[
(f, g) = (f^\R, g^\R)_\R .
\]
Finally, we write $J^\R_{\zeta,\epsilon}$ for the convolution operator on functions on $\R$ corresponding to Fourier multiplication by $\sgn^\epsilon(\cdot) \labs \cdot \rabs^{2\zeta}$; these are convolutions with kernels that are homogeneous of degree $-1-2\zeta$.
The transpose of the operator $J^\R_{\zeta,\epsilon}$ is $(-1)^\epsilon J^\R_{\zeta,\epsilon}$, for parity reasons.

Consistently with this notation, $J^\R_{\zeta/2,\epsilon}$ denotes the convolution operator on functions on $\R$ corresponding to Fourier multiplication by $\sgn^\epsilon(\cdot) \labs \cdot \rabs^{\zeta}$.
Consideration of the associated Fourier multipliers shows that
\[
(J^\R_{\zeta/2,0})^2 = J^\R_{\zeta,0},
\qquad
J^\R_{\zeta/2,\epsilon}J^\R_{\zeta/2,0} = J^\R_{\zeta,\epsilon},
\quad\text{and}\quad
(J^\R_{\zeta/2,\epsilon})^2 = J^\R_{\zeta,0}.
\]

It is now clear that
\[
\Biglpar \half(1-i) \biglpar J^\R_{\zeta/2,0} + i J^\R_{\zeta/2,\epsilon} \bigrpar \Bigrpar^2
= J^\R_{\zeta,\epsilon} ,
\]
so $\half(1-i) \biglpar J^\R_{\zeta/2,0} + i J^\R_{\zeta/2,\epsilon} \bigrpar$ is a square root of $J^\R_{\zeta,\epsilon}$, which we write $R _{\zeta/2,\epsilon}$; its transpose, written $S _{\zeta/2,\epsilon}$, is the operator $\half(1-i) \biglpar J^\R_{\zeta/2,0} + (-1)^\epsilon i J^\R_{\zeta/2,\epsilon} \bigrpar$, which is a square root of the transpose of $J^{\R}_{\zeta,\epsilon}$.

\begin{lemma}\label{lem:root-intertwining}
Suppose that $\epsilon \in \{ 0,1\}$ and that $-\half < \xi < 0$.
Then
\[
J^\R_{\zeta/2,\epsilon} \pi^\R_{\zeta,\epsilon} (x)
= \pi^\R_{0,\epsilon}(x) J^\R_{\zeta/2,\epsilon}
\qquad\text{and}\qquad
R_{\zeta/2,\epsilon} \pi^\R_{\zeta,\epsilon} (x)
= \pi^\R_{0,\epsilon}(x) R_{\zeta/2,\epsilon}
\]
for all $x \in \bar NA$.
\end{lemma}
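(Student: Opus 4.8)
The plan is to reduce both identities to the one-parameter subgroups $\bar N$ and $A$, which generate $\bar NA$, and then to the elementary way in which a convolution operator on $\R$ with a homogeneous kernel interacts with translations and dilations. First I would make the action of $\pi^\R_{\zeta,\epsilon}$ on $\bar NA$ explicit: the element $\bar n_r a_s$ is upper triangular, so in the notation of \eqref{eq:def-pi-R} the quantity $a+tc$ equals $s>0$ and $x\cdot t = s^{-2}(t+r)$, whence
\[
\pi^\R_{\zeta,\epsilon}(\bar n_r) f(t) = f(t+r),
\qquad
\pi^\R_{\zeta,\epsilon}(a_s) f(t) = s^{2\zeta-1}\, f(s^{-2}t).
\]
In particular this restriction does not depend on $\epsilon$, as already observed after \eqref{eq:def-pi-R}, and it suffices to check both intertwining relations on the generators $\bar n_r$ and $a_s$.

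The computation is cleanest after conjugating by $\cF$. On the Fourier transform side $J^\R_{\zeta/2,\epsilon}$ becomes multiplication by $m_\epsilon(y) = \sgn^\epsilon(y)\,\labs y\rabs^{\zeta}$, the translation $\pi^\R_{\zeta,\epsilon}(\bar n_r)$ becomes the modulation $\hat f(y)\mapsto e^{iry}\hat f(y)$, and the dilation $\pi^\R_{\zeta,\epsilon}(a_s)$ becomes $\hat f(y)\mapsto s^{2\zeta+1}\hat f(s^2 y)$ (while $\pi^\R_{0,\epsilon}(a_s)$ becomes $\hat f(y)\mapsto s\,\hat f(s^2 y)$). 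Multiplication by $m_\epsilon$ plainly commutes with modulation, disposing of $\bar n_r$; for $a_s$ one uses the homogeneity $m_\epsilon(s^2 y) = s^{2\zeta} m_\epsilon(y)$ to rewrite
\[
m_\epsilon(y)\, s^{2\zeta+1}\hat f(s^2 y) = s\cdot m_\epsilon(s^2 y)\,\hat f(s^2 y),
\]
and the right-hand side is exactly $\cF$ applied to $\pi^\R_{0,\epsilon}(a_s)\,J^\R_{\zeta/2,\epsilon} f$. This proves $J^\R_{\zeta/2,\epsilon}\pi^\R_{\zeta,\epsilon}(x) = \pi^\R_{0,\epsilon}(x)J^\R_{\zeta/2,\epsilon}$ for $x\in\bar NA$. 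The conceptual content is that a convolution operator on $\R$ whose kernel is homogeneous of degree $-1-\beta$ intertwines $\pi^\R_{\zeta,\epsilon}|_{\bar NA}$ with $\pi^\R_{\zeta-\beta,\epsilon}|_{\bar NA}$; here $\beta=\zeta$, just as $\beta=2\zeta$ recovers the intertwining of $\pi^\R_{\zeta,\epsilon}$ with $\pi^\R_{-\zeta,\epsilon}$ by $J^\R_{\zeta,\epsilon}$.

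For $R_{\zeta/2,\epsilon}$ I would simply use its definition as the linear combination $\half(1-i)\bigl(J^\R_{\zeta/2,0} + i J^\R_{\zeta/2,\epsilon}\bigr)$: each of $J^\R_{\zeta/2,0}$ and $J^\R_{\zeta/2,\epsilon}$ is a convolution operator with a kernel homogeneous of degree $-1-\zeta$, so by the previous paragraph each of them intertwines $\pi^\R_{\zeta,\epsilon}|_{\bar NA}$ with $\pi^\R_{0,\epsilon}|_{\bar NA}$ --- here we again invoke that these restrictions are independent of the second index, so that $J^\R_{\zeta/2,0}$ serves for $\epsilon=1$ as well as for $\epsilon=0$. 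Since the intertwining property is preserved by linear combinations, $R_{\zeta/2,\epsilon}$ intertwines $\pi^\R_{\zeta,\epsilon}|_{\bar NA}$ with $\pi^\R_{0,\epsilon}|_{\bar NA}$, which is the second identity.

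I do not expect a genuine obstacle here; the only point demanding care is the analytic bookkeeping. For $-\half<\xi<0$ the functions $\sgn^\epsilon(y)\,\labs y\rabs^{\zeta}$ and $\sgn^\epsilon(y)\,\labs y\rabs^{2\zeta}$ are locally integrable and of at most polynomial growth, so $J^\R_{\zeta/2,\epsilon}$ and $J^\R_{\zeta,\epsilon}$ are well defined as convolutions with tempered homogeneous kernels, and the manipulations with modulations and dilations above are legitimate first on the Schwartz class and then on the functions on $\R$ to which they are applied. The delicate behaviour of $\pi^\R_{\zeta,\epsilon}$ ``at infinity'' does not intervene, since every element of $\bar NA$ fixes the point at infinity of $\R$.
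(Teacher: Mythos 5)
Your proof is correct and follows essentially the same route as the paper: reduce to the generators $\bar n_r$ and $a_s$ of $\bar NA$, dispose of translations by the commutation of convolution with translation, handle dilations via the homogeneity of the kernel (equivalently, of the Fourier multiplier $\sgn^\epsilon(y)\labs y\rabs^{\zeta}$), and obtain the statement for $R_{\zeta/2,\epsilon}$ by linearity. The only cosmetic difference is that you carry out the dilation computation on the Fourier-transform side, whereas the paper performs the equivalent change of variables directly in the convolution integral; your explicit remark that $J^\R_{\zeta/2,0}$ also intertwines the $\epsilon=1$ representations on $\bar NA$ (because the restrictions to $\bar NA$ are independent of $\epsilon$) is a point the paper leaves implicit.
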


\begin{proof}
As $R_{\zeta/2, \epsilon}$ is a linear combination of $J^\R_{\zeta/2, 0}$ and $J^\R_{\zeta/2, \epsilon}$, it suffices to show that both these operators have the desired intertwining property.

Consider equation \eqref{eq:def-pi-R}.
If $x \in \bar N$, then $\pi^\R_{\zeta,\epsilon}(x)$ acts on functions on $\R$ by translation, irrespective of the values of $\zeta$ and $\epsilon$; translation commutes with convolution, so what we need to prove is evident.

Assume now that $x = a_s \in A$.
Then
\begin{align*}
J^\R_{\zeta/2,\epsilon} \pi^\R_{\zeta,\epsilon}(x) f(s^2u)
&= \frac{1}{c(\zeta/2, \epsilon)} \int_\R \labs s^2u - t \rabs^{-1-\zeta} \sgn^\epsilon(s^2 u - t) \, \pi^\R_{\zeta,\epsilon/2}(x) f(t) \,dt \\
&= \frac{1}{c(\zeta/2, \epsilon)} \int_\R \labs s^2u - t \rabs^{-1-\zeta} \sgn^\epsilon(s^2 u - t) \, s^{2\zeta-1} f(s^{-2}t) \,dt \\
&= \frac{s^{2\zeta+1}}{c(\zeta'2, \epsilon)} \int_\R \labs s^2 u - s^2 t \rabs^{-1-\zeta} \sgn^\epsilon(s^2 u - s^2 t) \,  f(t) \,dt \\
&= \frac{s^{-1}}{c(\zeta/2, \epsilon)} \int_\R \labs u - t \rabs^{-1-\zeta} \sgn^\epsilon(u - t) \,  f(t) \,dt \\
&= s^{-1} J_{\zeta/2,\epsilon}  f(u) ,
\end{align*}
whence
\[
J^\R_{\zeta/2,\epsilon} \pi^\R_{\zeta,\epsilon}(x) f(u)
= \pi^\R_{0,\epsilon}(x) J^\R_{\zeta/2,\epsilon} f(u)
\qquad\forall u \in \R.
\]
Since the desired result holds when $x \in \bar N$ and when $x \in A$, it holds for all $x \in \bar NA$.
\end{proof}

Note that
\begin{align*}
\biglnorm R_{\zeta/2, \epsilon} f \bigrnorm_2
= \frac{1}{(2\pi)^{1/2}} \, \biglnorm (R_{\zeta/2, \epsilon} f)\hat{\phantom{x}} \bigrnorm_2
= \frac{1}{(2\pi)^{1/2}} \, \biglnorm \labs\cdot\rabs^{\xi} \hat f \bigrnorm_2
= \biglnorm J^{\R}_{\xi/2, 0} f \bigrnorm_2  \\
\noalign{\noindent{and}}
\biglnorm S_{\zeta/2, \epsilon} f \bigrnorm_2
= \frac{1}{(2\pi)^{1/2}} \, \biglnorm (S_{\zeta/2, \epsilon} f)\hat{\phantom{x}} \bigrnorm_2
= \frac{1}{(2\pi)^{1/2}} \, \biglnorm \labs\cdot\rabs^{\xi} \hat f \bigrnorm_2
= \biglnorm J^{\R}_{\xi/2, 0} f \bigrnorm_2
\end{align*}
for all functions $f$ on $\R$ for which the last term is finite, by the Plancherel theorem; here $\biglnorm \cdot \bigrnorm_2$ indicates the standard $L^2(\R)$ norm.

For all $x \in \bar NA$,
\begin{equation}\label{eq:sph-fun-reg-repn}
\begin{aligned}
\phi_{\zeta,\epsilon}^{\mu,\nu}(x)
& = ( \pi_{\zeta, \epsilon}(x) f_{\zeta,\mu} , f_{-\zeta, -\nu} ) \\
& = d(\zeta,\epsilon, -\nu)^{-1} ( \pi_{\zeta,\epsilon}(x) f_{\zeta,\mu} , J_{\zeta, \epsilon} f_{\zeta, -\nu} ) \\
& = d(\zeta,\epsilon, -\nu)^{-1} (-1)^\epsilon ( J_{\zeta, \epsilon} \pi_{\zeta,\epsilon}(x) f_{\zeta,\mu} , f_{\zeta, -\nu} ) \\
& = d(\zeta,\epsilon, -\nu)^{-1} (-1)^\epsilon ( R_{\zeta/2, \epsilon} R_{\zeta/2, \epsilon} \pi^\R_{\zeta, \epsilon}(x) f_{\zeta,\mu}^\R , f_{\zeta, -\nu}^\R )_\R \\
& = d(\zeta,\epsilon, -\nu)^{-1} (-1)^\epsilon ( \pi^\R_{0, \epsilon}(x) R_{\zeta/2, \epsilon}  f_{\zeta,\mu}^\R , S_{\zeta/2, \epsilon} f_{\zeta, -\nu}^\R )_\R  .
\end{aligned}
\end{equation}
Now $\pi_{0,\epsilon}^\R$ is a unitary representation on $L^2(\R)$, and so
\begin{equation}\label{eq:first-expression}
\begin{aligned}
\biglnorm \phi_{\zeta,\epsilon}^{\mu,\nu} \bigrnorm_{\cb}
&\leq \labs d(\zeta,\epsilon, -\nu) \rabs^{-1}\biglnorm R_{\zeta/2, \epsilon}  f_{\zeta,\mu}^\R\bigrnorm_{2} \biglnorm S_{\zeta/2, \epsilon}  f_{\zeta,-\nu}^\R\bigrnorm_{2} \\
&= \labs d(\zeta,\epsilon, -\nu) \rabs^{-1} \biglnorm J^\R_{\xi/2, 0}  f_{\zeta,\mu}^\R\bigrnorm_{2} \biglnorm J^\R_{\xi/2, 0}  f_{\zeta,-\nu}^\R\bigrnorm_{2} .
\end{aligned}
\end{equation}

When $\mu = \nu$, more may be said.
More precisely, from the observation following \eqref{eq:basis-on-R}, $\hat f^{\,\R}_{\zeta, \mu} = \biglpar \hat f^{\,\R}_{\zeta, -\mu} \bigrpar \check{\phantom{k}}$, so
\[
\biglabs \cF \biglpar R_{\zeta/2,\epsilon} \, f^\R_{\zeta,\mu} \bigrpar\bigrabs
= \biglabs \biglpar \cF \biglpar S_{\zeta/2,\epsilon} f^\R_{\zeta,-\mu} \bigrpar\bigrpar \check{\phantom{k}} \bigrabs.
\]
Hence  from \eqref{eq:sph-fun-reg-repn} and \eqref{eq:abelian-A-norm},
\[
\begin{aligned}
\biglnorm \phi_{\zeta,\epsilon}^{\mu,\nu} \bigrnorm_{\cb}
&= \biglnorm \phi_{\zeta,\epsilon}^{\mu,\nu} \bigm|_{\bar NA} \bigrnorm_{B(\bar NA)}
 \geq \biglnorm \phi_{\zeta,\epsilon}^{\mu,\nu} \bigm|_{\bar N} \bigrnorm_{A(\bar N)} \\
&= \biglnorm \cF( \phi_{\zeta,\epsilon}^{\mu,\nu} \bigm|_{\bar N} ) \bigrnorm_{1}
= \labs d(\zeta,\epsilon, -\nu) \rabs^{-1} \biglnorm J^\R_{\xi/2, 0}  f_{\zeta,\mu}^\R\bigrnorm_{2} \biglnorm J^\R_{\xi/2, 0}  f_{\zeta,-\nu}^\R\bigrnorm_{2} ,
\end{aligned}
\]
and equality holds in \eqref{eq:first-expression}.

The proof of the theorem now hinges on the estimation of these norms, which is the subject of the next lemma.
First, by considering conjugation and reflection, we see that
\[
\biglnorm J^\R_{\xi/2, 0}  f_{\zeta,\nu}^\R\bigrnorm_{2}
= \biglnorm J^\R_{\xi/2, 0}  f_{\zeta,-\nu}^\R\bigrnorm_{2}
= \biglnorm J^\R_{\xi/2, 0}  f_{\bar\zeta,\nu}^\R\bigrnorm_{2}.
\]

\begin{lemma} \label{calcolo}
Suppose that $-\half < \xi < 0$ and $\mu$ is an integer.
Then $J^\R_{\xi/2, 0}  f_{\zeta,\mu}^\R$ is in $L^2(\R)$ and $\biglnorm J^\R_{\xi/2, 0}  f_{\zeta,\mu}^\R\bigrnorm_{2}^2$ is equal to
\[ 2^{2\xi}\,
\Re\lpar
\frac{
 \Gamma(\half + \bar\zeta + \frac{\mu}{2}) }
{\Gamma(\half - \zeta + \frac{\mu}{2}) } \,
( 1  + i  \tan(\pi\xi) \tanh(\pi \eta) ) \rpar
\]
when $\mu$ is even, and to
\[
2^{2\xi}\,
\Re\lpar
\frac{
 \Gamma(\half + \bar\zeta + \frac{\mu}{2}) }
{\Gamma(\half - \zeta + \frac{\mu}{2}) } \,
( 1 + i \tan(\pi\xi) \coth(\pi\eta) ) \rpar
\]
when $\mu$ is odd.
\end{lemma}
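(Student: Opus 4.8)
The plan is to express $\biglnorm J^\R_{\xi/2, 0}  f_{\zeta,\mu}^\R\bigrnorm_{2}^{2}$ as a bilateral series of ratios of gamma functions and then to sum that series by Dougall's formula~\eqref{eq:Dougall}. By the Plancherel theorem, $\biglnorm J^\R_{\xi/2,0}f_{\zeta,\mu}^\R\bigrnorm_2^2 = (2\pi)^{-1}\int_\R \labs y\rabs^{2\xi}\,\labs\cF f_{\zeta,\mu}^\R(y)\rabs^2\,dy$; since $\labs\cdot\rabs^{2\xi} = c(\xi,0)^{-1}\cF(\labs\cdot\rabs^{-1-2\xi})$ (this is~\eqref{eq:def-c-lambda-epsilon} with $\epsilon=0$) and $\overline{f_{\zeta,\mu}^\R} = f_{\bar\zeta,-\mu}^\R$, expanding $\labs\cF f_{\zeta,\mu}^\R\rabs^2$ and applying Parseval gives
\[
\biglnorm J^\R_{\xi/2, 0}  f_{\zeta,\mu}^\R\bigrnorm_{2}^{2}
= \bigl( J^\R_{\xi,0} f_{\zeta,\mu}^\R ,  f_{\bar\zeta,-\mu}^\R \bigr)_\R
= \frac{1}{c(\xi,0)} \int_\R\int_\R \labs u-t\rabs^{-1-2\xi}\, f_{\zeta,\mu}^\R(t)\, f_{\bar\zeta,-\mu}^\R(u) \,dt\,du .
\]
First I would check that $f_{\zeta,\mu}^\R \in L^1(\R) \cap L^2(\R)$ --- indeed $\labs f_{\zeta,\mu}^\R(t)\rabs = \pi^{-1/2}(1+t^2)^{\xi - 1/2}$ by~\eqref{eq:basis-on-R} --- and that $J^\R_{\xi/2,0} f_{\zeta,\mu}^\R \in L^2(\R)$. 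Substituting $t = \tan\theta$ and $u = \tan\phi$, using $f_{\zeta,\mu}(1,\tan\theta) = \pi^{-1/2}(\cos\theta)^{1-2\zeta}e^{i\mu\theta}$ and $\tan\phi-\tan\theta = \sin(\phi-\theta)/(\cos\theta\cos\phi)$, all the powers of $\cos\theta$ and $\cos\phi$ collapse, because $\zeta-\xi = i\eta$ and $\bar\zeta-\xi = -i\eta$, and one is left with
\[
\biglnorm J^\R_{\xi/2, 0}  f_{\zeta,\mu}^\R\bigrnorm_{2}^{2}
= \frac{1}{\pi\, c(\xi,0)} \int_{-\pi/2}^{\pi/2}\int_{-\pi/2}^{\pi/2} (\cos\theta)^{-2i\eta}(\cos\phi)^{2i\eta}\, e^{i\mu(\theta-\phi)}\, \labs\sin(\phi-\theta)\rabs^{-1-2\xi} \,d\theta\,d\phi .
\]

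Next I would expand $\labs\sin(\phi-\theta)\rabs^{-1-2\xi}$ in its Fourier series $\sum_{\kappa\in\Z} c_\kappa\, e^{i\kappa(\phi-\theta)}$, where Lemma~\ref{lem:definite-integral} (with $\epsilon=0$) gives $c_\kappa = \cos(\kappa\pi/2)\, 2^{1+2\xi}\,\Gamma(-2\xi) \bigl(\Gamma(\half-\xi+\frac{\kappa}{2})\,\Gamma(\half-\xi-\frac{\kappa}{2})\bigr)^{-1}$, so that only even $\kappa$ contribute. After interchanging the sum and the integral, the double integral factors as a product of two integrals $\int_{-\pi/2}^{\pi/2}(\cos\theta)^{\pm 2i\eta}e^{im\theta}\,d\theta$ with $m\in\Z$, each evaluated by~\eqref{eq:integral-formula}; this turns the right-hand side into a single bilateral series, which I would index by $j := \half(\mu-\kappa)$ --- an integer when $\mu$ is even and a half-integer when $\mu$ is odd. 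The interchange is justified by absolute convergence, which follows from $\labs c_\kappa\rabs = O(\labs\kappa\rabs^{2\xi})$ (by~\eqref{eq:gamma-ratios-limit} and Stirling's formula~\eqref{eq:Stirling}) together with an $O(\labs\mu-\kappa\rabs^{-1})$ bound on each single integral; this estimate cannot be bypassed, since for $-\half\le 2\xi<0$ the Fourier series does not converge in $L^2$.

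The gamma factors from the two single integrals combine into the ratio $\pi^2\,\Gamma(1-2i\eta)\Gamma(1+2i\eta)\bigl(\Gamma(1+i\eta+j)\,\Gamma(1-i\eta-j)\,\Gamma(1-i\eta+j)\,\Gamma(1+i\eta-j)\bigr)^{-1}$, and the recurrence relation together with the reflection formula~\eqref{eq:reflection} rewrite the denominator as $\pi^2(j^2+\eta^2)\bigl(\sin^2(\pi j)+\sinh^2(\pi\eta)\bigr)^{-1}$, equal to $\pi^2(j^2+\eta^2)/\sinh^2(\pi\eta)$ for $j\in\Z$ and to $\pi^2(j^2+\eta^2)/\cosh^2(\pi\eta)$ for $j\in\half+\Z$ --- this single computation produces the $\tanh(\pi\eta)$-versus-$\coth(\pi\eta)$ dichotomy. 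A further use of~\eqref{eq:reflection} rewrites $\Gamma(\half-\xi+\frac{\mu}{2}-j)^{-1}$ (which comes from $c_\kappa$, since $\frac{\kappa}{2}=\frac{\mu}{2}-j$) as a sign times $\Gamma(\half+\xi-\frac{\mu}{2}+j)$, the sign being exactly the $\cos(\kappa\pi/2)$ already present in $c_\kappa$; the two cancel, and, up to an explicit constant, one is left with $\sum_j \Gamma(\half+\xi-\frac{\mu}{2}+j)\,\Gamma(\half-\xi-\frac{\mu}{2}+j)^{-1}\,(j^2+\eta^2)^{-1}$. Writing $(j^2+\eta^2)^{-1} = (2i\eta)^{-1}\bigl((j-i\eta)^{-1}-(j+i\eta)^{-1}\bigr)$ and each linear factor as $\Gamma(j\mp i\eta)\,\Gamma(1+j\mp i\eta)^{-1}$, each of the two resulting series has exactly the shape summed by~\eqref{eq:Dougall}; its hypotheses hold, since $-\half<\xi<0$ forces $a\notin\Z$ and $\Re(a+b-c-d)=2\xi-1<-1$, while $b\notin\Z$ holds automatically for odd $\mu$ and, for even $\mu$, when $\eta\neq 0$ (the value at $\eta=0$ then following by continuity).

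Finally I would reassemble the pieces. A last application of~\eqref{eq:reflection} brings $\Gamma(\half+\bar\zeta+\frac{\mu}{2})$ into the numerator; the two Dougall evaluations turn out to be complex conjugates of one another, so their difference is $2i$ times an imaginary part, which with the $(2i\eta)^{-1}$ yields an $\eta^{-1}\Im(\,\cdot\,)$; the duplication formula~\eqref{eq:duplication} and~\eqref{eq:reflection} collapse the leftover constant $2^{1+2\xi}\Gamma(-2\xi)/c(\xi,0)$ to $2^{2\xi}\sec(\pi\xi)$; and expressing $\cos(\pi\bar\zeta) = \cos(\pi\xi)\cosh(\pi\eta)\bigl(1+i\tan(\pi\xi)\tanh(\pi\eta)\bigr)$ in the even case (resp. $\sin(\pi\zeta) = i\cos(\pi\xi)\sinh(\pi\eta)\bigl(1-i\tan(\pi\xi)\coth(\pi\eta)\bigr)$ in the odd case) turns the answer into the stated formula. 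I expect the bulk of the work --- and the step most prone to sign errors --- to be this gamma-function bookkeeping in the last two paragraphs: keeping the signs consistent, and carrying the even-$\mu$ (integer $j$) and odd-$\mu$ (half-integer $j$) cases in parallel without conflating them. The analytic point in the second paragraph, the interchange of summation and integration, is the other place that genuinely requires care.
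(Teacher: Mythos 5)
Your proposal is correct and follows essentially the same route as the paper's proof: Plancherel reduces the norm to the double integral $\frac{1}{c(\xi,0)}(I^\R_{\xi,0}f^\R_{\zeta,\mu},f^\R_{\bar\zeta,-\mu})_\R$, the substitution $t=\tan\theta$, $u=\tan\phi$ moves it to the circle, Fourier expansion (the paper phrases this as Parseval for $\langle m_{\epsilon,\mu,\eta}, h_\xi * m_{\epsilon,\mu,\eta}\rangle$ on $[-\pi,\pi]$ rather than expanding the kernel on $[-\pi,\pi/2]^2$, but the resulting bilateral series is the same) reduces everything to a sum of gamma-function ratios, and Dougall's formula plus the reflection and duplication identities yield the stated closed form, with the $\tanh$/$\coth$ dichotomy arising exactly as you describe from $\sin^2(\pi j)+\sinh^2(\pi\eta)$, i.e.\ the paper's $\labs\sin(\pi(i\eta-\mu/2))\rabs^2$.
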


\begin{proof}
By the Plancherel formula,  setting $t = \tan\theta$, $u=\tan\phi$, and changing variables as in \eqref{eq:double-integral-invariance}, we obtain
\begin{align*}
\biglnorm  J^\R_{\xi/2, 0}  f_{\zeta,\mu}^\R  \bigrnorm_2^2
&=
\lip J^\R_{\xi/2, 0}  f_{\zeta,\mu}^\R , J^\R_{\xi/2, 0}  f_{\zeta,\mu}^\R \rip \\
&= \lip J^\R_{\xi, 0}  f_{\zeta,\mu}^\R , f_{\zeta,\mu}^\R \rip \\
&= ( J^\R_{\xi, 0}  f_{\zeta,\mu}^\R , \bar f_{\zeta,\mu}^\R )_\R \\
&= \frac{1}{c(\xi,0)} ( I^\R_{\xi, 0}  f_{\zeta,\mu}^\R , f_{\bar\zeta,-\mu}^\R )_\R \\
&= \frac{1}{c(\xi,0)}
\int_{\R} \int_{\R}
f_{\zeta, \mu}(1,t)\, f_{\bar\zeta, -\mu}(1,u)\,\labs t-u \rabs^{-2\xi-1}\, dt\, du
\\
&= \frac{1}{\pi \, c(\xi,0)} \int_{-\pi/2}^{\pi/2} \int_{-\pi/2}^{\pi/2}
e^{i\mu(\theta-\varphi)}\,
\Bigl(\frac{1+\tan^2\theta}{1+\tan^2\varphi}\Bigr)^{i\eta}\,
\labs \cosec(\theta-\varphi) \rabs^{1+2\xi}\, d\theta\, d\varphi ;
\end{align*}
the inner product here is the $L^2$ inner product on functions on $\R$.

Define the functions $m_{\epsilon,\mu,\eta}$ and $h_{\xi}$ on the unit circle by the formulae.
\begin{gather*}
m_{\epsilon,\mu,\eta}(\cos\theta,\sin\theta)
=\sgn^\epsilon(\cos\theta)\,e^{i\mu\theta}\labs\cos\theta\rabs^{-2i\eta} \\
h_{\xi}(\cos\theta,\sin\theta)=\labs \cosec\theta \rabs^{1+2\xi},
\end{gather*}
where $\theta\in \R$.
Then $m_{\epsilon,\mu,\eta}$ and $h_{\xi}$ are $\pi$-periodic, so that
\[
\biglnorm  J^\R_{\xi/2, 0}  f_{\zeta,\mu}^\R  \bigrnorm_2^2
=
\frac{1}{4 \, \pi \, c(\xi,0)}
\int_{-\pi}^{\pi}
\int_{-\pi}^{\pi}
m_{\epsilon,\mu,\eta}(\theta)\,{h_{\xi}}(\theta-\varphi)\,\bar{m}_{\epsilon,\mu,\eta}(\varphi)
\, d\varphi\,
 d\theta,
\]
and the last double integral may be interpreted as the inner product on the circle between the functions $m_{\epsilon,\mu,\eta}$ and $h_{\xi} * m_{\epsilon,\mu,\eta}$.
Therefore, by the Parseval identity,
\begin{align*}
\biglnorm J^\R_{\xi/2, 0}  f_{\zeta,\mu}^\R\bigrnorm_{2}^2
&=\frac{\pi}{c(\xi,0)} \,\sum_{\kappa \in \Z} \labs \tilde{m}_{\epsilon,\mu,\eta}(\kappa ) \rabs^2\, \tilde{h}_{\xi}(\kappa ),
\end{align*}
where the tilde indicates the Fourier coefficients, which we now compute.

By Lemma \ref{lem:definite-integral},
\begin{align*}
\tilde{h}_{\xi}(\kappa )
&=\frac1{2\pi}
\int_{-\pi}^{\pi}
\labs \sin\theta \rabs^{-2\xi-1}
\,e^{-i\kappa \theta}\,d\theta,
\\
&=\frac1{\pi}
\int_{0}^{\pi}
|\sin(\theta)|^{-2\xi-1}
\,\cos(\kappa \theta)\,d\theta
\\
&=2^{1+2\xi}\cos(\kappa \pi/2)
\frac{\Gamma(-2\xi)}{\Gamma(\half - \xi + \frac{\kappa }{2})\, \Gamma(\half - \xi - \frac{\kappa }{2})}
\end{align*}
which is $0$ unless $\kappa $ is even; we assume this for the rest of this computation.
Thus from the duplication formula \eqref{eq:duplication} and the reflection formula \eqref{eq:reflection}, applied twice,
\begin{align*}
\tilde{h}_{\xi}(\kappa )
&=\cos(\kappa \pi/2) \, \pi^{-1/2}\,
\frac{\Gamma(-\xi) \, \Gamma(\half - \xi)}
{\Gamma(\half - \xi + \frac{\kappa }{2}) \, \Gamma(\half - \xi - \frac{\kappa }{2})} \\
&=\cos(\kappa \pi/2) \, \pi^{-1/2}\,
\frac{\Gamma(-\xi)}{\Gamma(\half - \xi + \frac{\kappa }{2})} \, \frac{\Gamma(\half - \xi)}{\Gamma(\half - \xi - \frac{\kappa }{2})} \\
&=\cos(\kappa \pi/2) \, \pi^{-1/2}\,
\frac{\Gamma(-\xi)}{\Gamma(\half - \xi + \frac{\kappa }{2})} \, \frac{\sin(\pi(\half + \xi + \frac{\kappa }{2})) \, \Gamma(\half + \xi + \frac{\kappa }{2})}{\sin(\pi( \half + \xi)) \, \Gamma(\half + \xi)} \\
&=\cos^2(\kappa \pi/2)\, \pi^{-1/2}\,
\frac{\Gamma(-\xi)}{\Gamma(\half + \xi)}
\frac{\Gamma(\half + \xi + \frac{\kappa }{2})}{\Gamma(\half - \xi + \frac{\kappa }{2})} \,.
\end{align*}

It is enough to evaluate $\tilde{m}_{\epsilon,\mu,\eta}(\kappa )$ when $\kappa $ is even.
The case where $\eta=0$ is easier, and we suppose that $\eta\neq0$.
By parity, a change of variable, formula \eqref{eq:integral-formula}, and the evenness of $\epsilon +\mu$ and of $\kappa $,
\begin{align*}
\tilde{m}_{\epsilon,\mu,\eta}(\kappa )
&=
\frac1{2\pi}
\int_{-\pi}^{\pi}
\labs\cos\theta\rabs^{-2i\eta}\,e^{-i(\kappa -\mu)\theta} \sgn^\epsilon(\cos\theta) \,d\theta
\\
&=
\frac{1}{\pi}
\int_{0}^{\pi}
\labs\cos\theta\rabs^{-2i\eta} \cos((\kappa -\mu)\theta) \sgn^\epsilon(\cos\theta) \,d\theta
\\
&=
\frac{1}{\pi}
\int_{0}^{\pi/2}
\labs\cos\theta\rabs^{-2i\eta} \cos((\kappa -\mu)\theta) \,d\theta \\
&\qquad +
\frac{(-1)^\epsilon}{\pi}
\int_{\pi/2}^{\pi}
\labs\cos\theta\rabs^{-2i\eta} \cos((\kappa -\mu)\theta) \,d\theta
\\
&=
\frac{1}{\pi}
\int_{0}^{\pi/2}
\labs\cos\theta\rabs^{-2i\eta} \cos((\kappa -\mu)\theta) \,d\theta \\
&\qquad +
\frac{(-1)^\epsilon}{\pi}
\int_{0}^{\pi/2}
\labs\cos(\pi-\theta)\rabs^{-2i\eta} \cos((\kappa -\mu)(\pi-\theta)) \,d\theta
\\
&=
\frac{1}{\pi}
\int_{0}^{\pi/2}
\labs\cos\theta\rabs^{-2i\eta} \cos((\kappa -\mu)\theta) \,d\theta \\
&\qquad +
\frac{(-1)^\epsilon \cos((\kappa -\mu)\pi)}{\pi}
\int_{0}^{\pi/2}
\labs\cos\theta\rabs^{-2i\eta} \cos((\kappa -\mu)\theta) \,d\theta
\\
&=
\frac{2}{\pi}
\int_{0}^{\pi/2}
\labs\cos\theta\rabs^{-2i\eta} \cos((\kappa -\mu)\theta) \,d\theta \\
&= 2^{2i\eta} \frac{\Gamma(1 - 2i\eta)}{\Gamma(1 - i\eta + \frac{\kappa }{2} - \frac{\mu}{2} )\,\Gamma(1 - i\eta  - \frac{\kappa }{2} + \frac{\mu}{2}) } \,.
\end{align*}
We now apply the duplication formula \eqref{eq:duplication} to the gamma function in the numerator, and the reflection formula \eqref{eq:reflection} to a gamma function in the denominator to deduce that
\begin{align*}
\tilde{m}_{\epsilon,\mu,\eta}(\kappa )
&= \pi^{-3/2} \frac{\Gamma(\half - i\eta) \, \Gamma(1 - i\eta) \,\Gamma(i\eta  + \frac{\kappa }{2} - \frac{\mu}{2})} {\Gamma(1 - i\eta + \frac{\kappa }{2} - \frac{\mu}{2} ) } \sin(\pi(i\eta  + \frac{\kappa }{2} - \frac{\mu}{2})) \,.
\end{align*}

Note that, since $\Gamma(\bar z) = \bar\Gamma(z)$, by the reflection formula,
\[
\labs \Gamma(\half-i\eta) \rabs^2
= \Gamma(\half-i\eta)\,\Gamma(\half+i\eta)
= \Gamma(1 - \half - i\eta) \, \Gamma(\half+i\eta)
= \frac{\pi}{\sin(\pi(\half+i\eta)) } \,,
\]
and similarly
\[
\labs \Gamma(1-i\eta) \rabs^2
= i\eta \,\Gamma(i\eta)\,\Gamma(1-i\eta)
= \frac{i\eta \pi}{\sin(\pi(i\eta)) } \,,
\]
while
\[
\labs \frac{\Gamma(i\eta  + \frac{\kappa }{2} - \frac{\mu}{2})} {\Gamma(1 - i\eta + \frac{\kappa }{2} - \frac{\mu}{2} ) } \rabs^2
= \labs \frac{\Gamma(i\eta  + \frac{\kappa }{2} - \frac{\mu}{2})} {\Gamma(1 + i\eta + \frac{\kappa }{2} - \frac{\mu}{2} ) } \rabs^2
= \frac{1}{\eta^2 + \biglpar \frac{\kappa }{2} - \frac{\mu}{2} \bigrpar^2} \,.
\]
Thus
\begin{align*}
\labs \tilde{m}_{\epsilon,\mu,\eta}(\kappa ) \rabs^2
& = \frac{1}{\pi} \,
\frac{ \labs \sin(\pi(i\eta - \frac{\mu}{2})) \rabs^2 }
{ \sinh(\pi\eta) \cosh(\pi \eta) } \,
\frac{\eta }{ \eta^2 + \biglpar \frac{\kappa }{2} - \frac{\mu}{2} \bigrpar^2} \\
& = \frac{1}{\pi} \,
\gamma(\eta,\mu) \,
\frac{\eta }{ \eta^2 + \biglpar \frac{\kappa }{2} - \frac{\mu}{2} \bigrpar^2} \,,
\end{align*}
say.

Hence when $\eta\neq 0$,
\begin{align*}
\biglnorm J^\R_{\xi/2, 0}  f_{\zeta,\mu}^\R\bigrnorm_{2}^2
&=\frac{2^{2\xi}}{\pi} \,
\gamma(\eta,\mu)\,\sum_{\kappa \in \Z}
\frac{\eta}{\eta^2+\frac{(\kappa -\mu)^2}{4}}\,
\cos^2(\kappa \pi/2)\,
\frac{\Gamma(\half + \xi + \frac{\kappa }{2})}{\Gamma(\half - \xi + \frac{\kappa }{2})}
\\
&=\frac{2^{2\xi}}{\pi} \,
\gamma(\eta,\mu)\,
\sum_{\kappa \in \Z}
\frac{\eta}{\eta^2+(\kappa -\frac{\mu}{2})^2}\,
\frac{\Gamma(\half + \xi + \kappa )}{\Gamma(\half - \xi + \kappa )} \,.
\end{align*}
To evaluate the sum, we note that
\begin{align*}
\frac{\eta}{(\kappa -\frac{\mu}{2} )^2+\eta^2}
=-\Im\lpar
\frac{\Gamma(i\eta + \kappa - \frac{\mu}{2})}{\Gamma(1 + i\eta + \kappa - \frac{\mu}{2})} \rpar,
\end{align*}
and apply Dougall's formula \eqref{eq:Dougall} and then the reflection formula \eqref{eq:reflection} to deduce that
\begin{align*}
\biglnorm J^\R_{\xi/2, 0}  f_{\zeta,\mu}^\R\bigrnorm_{2}^2
&=-\frac{2^{2\xi}}{\pi} \,
\gamma(\eta,\mu)\,\Im\lpar
\sum_{\kappa \in \Z}
\frac{\Gamma(\half + \xi + \kappa )}{\Gamma(\half - \xi + \kappa )}
\frac{\Gamma(i\eta + \kappa - \frac{\mu}{2})}{\Gamma(1 + i\eta  + \kappa -\frac{\mu}{2})} \rpar
\\
&=
-2^{2\xi}\, \pi\,
\gamma(\eta,\mu)\,
\Im\lpar
\frac{
\cosec(\pi(\half + \xi))\,\cosec(\pi(i\eta - \frac{\mu}{2} ))}
{\Gamma(\half - \xi + i\eta -\frac{\mu}{2})\,\Gamma(\half - \xi - i\eta + \frac{\mu}{2})} \rpar
\\
&=
-2^{2\xi}\,\gamma(\eta,\mu)\,
\Im\lpar
\frac{
 \Gamma(\half + \bar\zeta + \frac{\mu}{2}) \sin(\pi(\half + \bar\zeta + \frac{\mu}{2}))}
{\Gamma(\half - \zeta + \frac{\mu}{2}) \sin(\pi(i\eta-\frac{\mu}{2})) \cos(\pi\xi)} \rpar \\
&=
-2^{2\xi}\, \frac{ \labs \sin(\pi(i\eta - \frac{\mu}{2})) \rabs^2 }
{ \sinh(\pi\eta) \cosh(\pi \eta) } \,
\Im\lpar
\frac{
 \Gamma(\half + \bar\zeta + \frac{\mu}{2}) \cos(\pi(\bar\zeta + \frac{\mu}{2}))}
{\Gamma(\half - \zeta + \frac{\mu}{2}) \sin(\pi(i\eta - \frac{\mu}{2})) \cos(\pi\xi)} \rpar \\
&=
2^{2\xi}\,
\Im\lpar
\frac{
 \Gamma(\half + \bar\zeta + \frac{\mu}{2}) }
{\Gamma(\half - \zeta + \frac{\mu}{2}) } \,
\frac{ \sin(\pi(i\eta + \frac{\mu}{2})) \cos(\pi(\bar\zeta + \frac{\mu}{2}))}
{ \sinh(\pi\eta) \cosh(\pi \eta)  \cos(\pi\xi)} \rpar,
\end{align*}
which is equal to
\[ 2^{2\xi}\,
\Re\lpar
\frac{
 \Gamma(\half + \bar\zeta + \frac{\mu}{2}) }
{\Gamma(\half - \zeta + \frac{\mu}{2}) } \,
( 1  + i  \tan(\pi\xi) \tanh(\pi \eta) ) \rpar
\]
when $\mu$ is even, and to
\begin{equation}\label{bad-estimate}
   2^{2\xi}\,
\Re\lpar
\frac{
 \Gamma(\half + \bar\zeta + \frac{\mu}{2}) }
{\Gamma(\half - \zeta + \frac{\mu}{2}) } \,
( 1 + i \tan(\pi\xi) \coth(\pi\eta) ) \rpar
\end{equation}
when $\mu$ is odd.
\end{proof}

We now have all the ingredients to prove the theorem.
We first consider the case where $\epsilon = 0$.
Observe that
\begin{align*}
\biglnorm J^\R_{\xi/2, 0}  f_{\zeta,\mu}^\R\bigrnorm_{2}^2
&= \labs 2^{2\xi}\,
\Re\lpar
\frac{
 \Gamma(\half + \bar\zeta + \frac{\mu}{2}) }
{\Gamma(\half - \zeta + \frac{\mu}{2}) } \,
( 1 + i  \tan(\pi\xi) \tanh(\pi \eta) ) \rpar \rabs \\
&\leq 2^{2\xi}\,
\labs \frac{
 \Gamma(\half + \bar\zeta + \frac{\mu}{2}) }
{\Gamma(\half - \zeta + \frac{\mu}{2}) } \rabs
\labs 1 + i  \tan(\pi\xi) \tanh(\pi \eta) \rabs
\\
&\leq 2^{2\xi} \sec(\pi\xi)
\labs \frac{
 \Gamma(\half + \zeta + \frac{\mu}{2}) }
{\Gamma(\half - \zeta + \frac{\mu}{2}) } \rabs .
\end{align*}
Hence from \eqref{eq:first-expression} and \eqref{eq:gamma-ratios},
\begin{equation}\label{none}
\begin{aligned}
\biglnorm \phi_{\zeta,\epsilon}^{\mu,\nu} \bigrnorm_{\cb}
&\leq \sec(\pi\xi) \labs \frac {\Gamma(\half + \frac{\nu}{2} - \zeta )} { \Gamma(\half + \frac{\nu}{2} + \zeta ) }\rabs^{1/2}
\labs \frac{
 \Gamma(\half + \frac{\mu}{2} + \zeta) }
{\Gamma(\half + \frac{\mu}{2} - \zeta) } \rabs ^{1/2}.
\end{aligned}
\end{equation}
When $\mu = \nu$, the two ratios of gamma functions cancel, and we obtain the estimate
\[
\begin{aligned}
\biglnorm \phi_{\zeta,\epsilon}^{\mu,\nu} \bigrnorm_{\cb}
&\leq \sec(\pi\xi) ,
\end{aligned}
\]
which is very similar to Steenstrup's estimate \cite{Stp}.
In general, from \eqref{eq:gamma-ratios-limit}, we may write
\[
\biglnorm \phi_{\zeta,\epsilon}^{\mu,\nu} \bigrnorm_{\cb}
\leq C_{\xi_0} \, \Biglpar\frac{ |\mu| + 1 }{ | \nu| + 1 } \Bigrpar^\xi,
\]
uniformly for $\zeta$ such that $| \Re\zeta | \leq \xi_0 < \half$.
In particular, we can bound the norms of the spherical functions uniformly when $\xi > 0$ and  $\labs \mu \rabs \leq \labs \nu \rabs$ and when $\xi < 0$ and  $\labs \mu \rabs \geq \labs \nu \rabs$.

Now we consider the case where $\epsilon = 1$.
In this case, a similar argument shows that
\begin{equation}\label{one}
\begin{aligned}
\biglnorm \phi_{\zeta,\epsilon}^{\mu,\nu} \bigrnorm_{\cb}
&\leq \sec(\pi\xi) \coth(\pi\eta)
\labs \frac {\Gamma(\half + \frac{\nu}{2} - \zeta )} { \Gamma(\half + \frac{\nu}{2} + \zeta ) }\rabs^{1/2}
\labs \frac{\Gamma(\half + \frac{\mu}{2} + \zeta) }
{\Gamma(\half + \frac{\mu}{2} - \zeta) } \rabs ^{1/2}.
\end{aligned}
\end{equation}
Again when $\mu = \nu$, there is cancellation of the gamma factors.
The difficulty is that the hyperbolic cotangent becomes infinite as $\eta \to 0$.
We can show that the limit as $\eta$ approaches $0$ of the expression \eqref{bad-estimate} is finite, but we have not computed it exactly.
Consequently, we may again assert that
\[
\sup_{\eta\in\R} \biglnorm \phi_{\zeta,\epsilon}^{\mu,\nu} \bigrnorm_{\cb} < \infty
\]
for all $\mu$ and $\eta$, which again suggests that a sharper inequality for the norm should exist.

\section{Acknowledgements}
The first and third named authors were supported by GNAMPA; the second named author was supported by the Australian Research Council.

\end{document}